\newtheorem{theorem}{Theorem}[section]
\newtheorem{lemma}[theorem]{Lemma}
\newtheorem{conj}[theorem]{Conjecture}
\theoremstyle{definition}
\newtheorem{definition}[theorem]{Definition}
\theoremstyle{remark}
\numberwithin{equation}{section}
\newcommand\nutwid{\overset {\text{\lower 3pt\hbox{$\sim$}}}\nu}
\newcommand\omycite[1]{}
\newcommand{\beqs}{\begin{equation*}}
\newcommand{\eeqs}{\end{equation*}}
\newcommand{\beq}{\begin{equation}}
\newcommand{\eeq}{\end{equation}}
\renewcommand{\MR}[1]{\href{http://www.ams.org/mathscinet-getitem?mr={#1}}{MR{#1}}}
\begin{document}
\title[Congruences for broken 3-diamond partitions]{A conjecture of Radu and Sellers on congruences modulo powers of 2 for broken 3-diamond partitions}


\author{Dandan Chen}
\address{Department of Mathematics, Shanghai University, People's Republic of China}
\address{Newtouch Center for Mathematics of Shanghai University, Shanghai, People's Republic of China}
\email{mathcdd@shu.edu.cn}
\author{Rong Chen}
\address{Department of Mathematics, Shanghai Normal University, Shanghai, China}
\email{rchen@shnu.edu.cn}
\author{Siyu Yin*}
\address{Department of Mathematics, Shanghai University, People's Republic of China}
\email{siyuyin@shu.edu.cn, siyuyin0113@126.com}


\subjclass[2010]{ 11P83, 05A17}

\date{}


\keywords{Broken $k$-diamonds; Congruences; Modular forms; Partitions }

\begin{abstract}
In 2007, Andrews and Paule introduced the family of functions $\Delta_k(n)$, which enumerate the number of broken $k$-diamond partitions for a fixed positive integer $k$. In 2013, Radu and Sellers completely characterized the parity of $\Delta_3(8n+r)$ for certain values of $r$ and proposed a conjecture on congruences modulo powers of $2$ for broken $3$-diamond partitions. In this paper, we employ an unconventional $U$-sequence to resolve the revised conjecture put forward by Radu and Sellers.
\end{abstract}

\maketitle


\section{Introduction}
A partition of a positive integer $n$ is defined as a sequence of positive integers in non-increasing order that sum to $n$. Denoted by $p(n)$, it represents the number of such partitions of $n$. The following celebrated congruences were discovered by Ramanujan in \cite{Ramanujan-1919}:
\begin{align*}
p(5n+4) &\equiv 0 \pmod{5},\\
p(7n+5) &\equiv 0 \pmod{7},\\
p(11n+6) &\equiv 0 \pmod{11},
\end{align*}
for every non-negative integer $n$. In fact, for $\alpha \geq 1$,
\begin{align}
p(5^{\alpha}n + \delta_{5,\alpha}) &\equiv 0 \pmod{5^{\alpha}}, \label{5-congruence}\\
p(7^{\alpha}n + \delta_{7,\alpha}) &\equiv 0 \pmod{7^{\lfloor \frac{\alpha+2}{2} \rfloor}}, \label{7-congruence}\\
p(11^{\alpha}n + \delta_{11,\alpha}) &\equiv 0 \pmod{11^{\alpha}} \label{11-congruence},
\end{align}
where $\delta_{t,\alpha}$ is the multiplicative inverse of $24$ modulo $t^{\alpha}$. Congruences \eqref{5-congruence} and \eqref{7-congruence} were first proved by Watson \cite{Watson-1938} in 1938. For an elementary proof of \eqref{5-congruence}, see Hirschhorn and Hunt \cite{Hirschhorn-1981}; for an elementary proof of \eqref{7-congruence}, see Garvan \cite{Garvan-1984}. Congruence \eqref{11-congruence} was proved by Atkin \cite{Atkin-1967} in 1967.

In 2007, Andrews and Paule \cite{Andrews-Paule-2007} introduced the family of functions $\Delta_k(n)$, which enumerate the number of broken $k$-diamond partitions for a fixed positive integer $k$. These are constructed such that the generating functions of their counting sequences $\{\Delta_k(n)\}_{n \geq 0}$ are closely related to modular forms. Namely,
\begin{align*}
\sum_{n=0}^{\infty} \Delta_k(n) q^n
&= \prod_{n=1}^{\infty} \frac{(1-q^{2n})(1-q^{(2k+1)n})}{(1-q^n)^3 (1-q^{(4k+2)n})} \\
&= q^{(k+1)/12} \frac{\eta(2\tau) \eta((2k+1)\tau)}{\eta(\tau)^3 \eta((4k+2)\tau)}, \qquad k \geq 1,
\end{align*}
where we recall the Dedekind eta function
\begin{align*}
\eta(\tau) = q^{\frac{1}{24}} \prod_{n=1}^{\infty} (1 - q^n) \qquad (q := e^{2\pi i \tau}).
\end{align*}
For $k = 1$, Andrews and Paule proved that for all $n \geq 0$, $\Delta_1(2n+1) \equiv 0 \pmod{3}$. Hirschhorn and Sellers \cite{Hirschhorn-Sellers-2007} considered the cases $k = 1$ and $k = 2$ and obtained the following parity results: for all $n \geq 0$,
\begin{align*}
&\Delta_1(4n+2) \equiv 0 \pmod{2},\\
&\Delta_1(4n+3) \equiv 0 \pmod{2},\\
&\Delta_2(10n+2) \equiv 0 \pmod{2},\\
&\Delta_2(10n+6) \equiv 0 \pmod{2}.
\end{align*}
 Besides, Chan \cite{Chan-2008} provided a different proof of the parity results for $\Delta_2$ mentioned above, as well as a number of congruences modulo powers of $5$. Paule and Radu \cite{Paule-Radu-2010} also proved several congruences modulo $5$ for broken $2$-diamond partitions, and they also shared conjectures related to broken $3$-diamond partitions modulo $7$ and broken $5$-diamond partitions modulo $11$, two of which were proved by Xiong \cite{Xiong-2011}.

In 2013, Radu and Sellers \cite{Radu-Sellers-2013} completely characterized the parity of $\Delta_3(8n+r)$ for certain $r$ and proposed the following conjecture modulo powers of $2$.
\begin{conj}
\label{mconj}
Let
\begin{align*}
\lambda_{\alpha}= \left\{\begin{array}{ll}
			\frac{2^{\alpha+1}+1}{3}, ~&\text{if}~\alpha~\text{is even},\\
			\frac{2^{\alpha}+1}{3}, ~&\text{if}~\alpha~\text{is odd}.
			\end{array}\right.
\end{align*}
Then for all $\alpha \geq 1$ and $n \geq 0$,
\begin{align}\label{conj-1}
\Delta_3(\lambda_{\alpha})\Delta_3(2^{\alpha+2}n+\lambda_{\alpha+2})\equiv\Delta_3(\lambda_{\alpha+2})\Delta_3(2^{\alpha}n+\lambda_{\alpha})\pmod{2^{\alpha}}
\end{align}
and
\begin{align}\label{conj-2}
\Delta_3(\lambda_{\alpha})\equiv 1 \pmod{2}.
\end{align}
\end{conj}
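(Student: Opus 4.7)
The plan is to realize the statement as the stabilization of a $U_2$-sequence of modular functions, in the framework developed by Watson, Atkin, and refined by Paule--Radu, Gordon, Hirschhorn and others for partition congruences of Ramanujan type.

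The first step is to introduce, for each $\alpha \geq 1$, the generating function
$$A_\alpha(q) := \sum_{n \geq 0} \Delta_3(2^\alpha n + \lambda_\alpha)\, q^n,$$
and then to multiply it by an auxiliary eta-quotient $E_\alpha(\tau)$ to obtain a modular function $L_\alpha(\tau) := E_\alpha(\tau) A_\alpha(q^{\text{appropriate power}})$ on some $\Gamma_0(N)$ with $N$ a small multiple of $14$. The factor $E_\alpha$ must be chosen so that $L_\alpha$ is holomorphic at every cusp and so that all $L_\alpha$ for $\alpha \geq 1$ sit inside a common finite-dimensional $\mathbb{Z}_2$-module $V$ of modular functions.

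The second step is to derive a closed-form recursion $L_{\alpha+2} = \mathcal{U}(L_\alpha)$, where $\mathcal{U}$ is the composite of two applications of the $U_2$-operator intertwined by multiplication with an explicit eta-quotient. The step of two arises naturally from the parity-dependent formula for $\lambda_\alpha$; indeed, one checks directly that $\lambda_{\alpha+2} = 4\lambda_\alpha + c_\alpha$ for a controllable correction $c_\alpha$, which allows one to write the dissection extracting $\Delta_3(2^{\alpha+2}n + \lambda_{\alpha+2})$ as a double $U_2$-image of the one extracting $\Delta_3(2^\alpha n + \lambda_\alpha)$. The unconventional aspect, announced in the abstract, is that the operator $\mathcal{U}$ is not the naive $U_2^2$; rather, one must insert a specific modular twist between the two applications of $U_2$ so that the resulting $\mathcal{U}$ preserves $V$ and has the correct 2-adic behaviour.

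The third and main step is to prove the 2-adic contraction estimate
$$\mathcal{U}(V) \subseteq 2^{2} \cdot V$$
after appropriate normalization. Iterating yields $L_{\alpha+2} \equiv \bigl(\Delta_3(\lambda_{\alpha+2})/\Delta_3(\lambda_\alpha)\bigr) L_\alpha \pmod{2^\alpha}$ inside $V$, and reading off coefficients after dividing out the common factor $E_\alpha$ (which is a unit in $\mathbb{Z}_2[[q]]$) delivers \eqref{conj-1}. The oddness statement \eqref{conj-2} is proved along the way by a short direct induction: reduce the recursion modulo 2 and track the constant term of $L_\alpha$, using the base cases $\Delta_3(\lambda_1) = \Delta_3(1) = 1$ and $\Delta_3(\lambda_2) = \Delta_3(3)$.

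The hardest part will be the explicit identification of $V$ together with the verification of the contraction property. This amounts to finding a manageable basis of $V$, proving closure under $\mathcal{U}$ via finitely many eta-quotient identities, and then computing the matrix of $\mathcal{U}$ in this basis to check that all of its entries are divisible by $4$. The balance is delicate: $V$ must be large enough to be $\mathcal{U}$-stable but small enough that the matrix computation is feasible, and the base cases $\alpha = 1, 2$ must be verified separately to seed the induction. A secondary subtlety is that a naive choice of the intertwining factor produces only the weaker congruence modulo $2^{\alpha-c}$ for some loss $c > 0$; extracting the sharp modulus $2^\alpha$ claimed in the conjecture is precisely what forces the unconventional twist in the definition of $\mathcal{U}$, and pinning down this twist is expected to be the most delicate point of the argument.
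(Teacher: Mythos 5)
The fundamental problem is that the statement you are trying to prove is false: congruence \eqref{conj-1} fails already for $\alpha=3$, $n=1$. One computes $\Delta_3(\lambda_3)=\Delta_3(3)=19$, $\Delta_3(\lambda_5)=\Delta_3(2^3+\lambda_3)=\Delta_3(11)=2653$, $\Delta_3(2^5+\lambda_5)=\Delta_3(43)=236491535$, and
\begin{equation*}
\Delta_3(\lambda_{3})\Delta_3(2^5+\lambda_{5})-\Delta_3(\lambda_{5})\Delta_3(2^3+\lambda_3)=2^2\cdot 1121575189\not\equiv0\pmod {2^3},
\end{equation*}
so the difference is exactly divisible by $2^2$, not $2^3$. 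The correct modulus is $2^{2\lfloor\alpha/2\rfloor}$ (Theorem \ref{thm}), which coincides with $2^\alpha$ only for even $\alpha$ and loses a factor of $2$ for each odd $\alpha$. (Statement \eqref{conj-2} is true, and was already proved by Xia; your sketch for it is reasonable, though note $\Delta_3(\lambda_1)=\Delta_3(1)=3$, not $1$ --- only its oddness matters.)

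Consequently your third step, the uniform contraction estimate $\mathcal{U}(V)\subseteq 2^2\cdot V$ yielding $L_{\alpha+2}\equiv cL_\alpha\pmod{2^\alpha}$ for \emph{all} $\alpha$, cannot be carried out. The strategy itself (a $U_2$-sequence of modular functions on $\Gamma_0(28)$, a $\mathcal{U}$-stable module with a computable basis, and $2$-adic lower bounds on the matrix entries) is essentially the one used here, but the two half-steps are genuinely asymmetric: the operators $U^{(0)}$ and $U^{(1)}$ of Definition \ref{def-A-B} involve different eta-quotient twists, the odd-indexed and even-indexed functions $L_\alpha$ live in different two-dimensional modules (spanned by $\{p_0,y_0\}$ and by $\{p_1,1\}$ over $\mathbb{Z}[t]$ respectively), and the resulting gain in $2$-adic valuation per double step is $2^2$ only in the even phase; the odd phase gives $L_{2\alpha+1}\equiv x_{2\alpha-1}L_{2\alpha-1}\pmod{2^{2\alpha-2}}$ rather than $\pmod{2^{2\alpha-1}}$ (Theorem \ref{main-thm}). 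No choice of ``intertwining twist'' can repair this, since the counterexample above shows the sharp congruence is $2^{2\lfloor\alpha/2\rfloor}$. To salvage your argument you must first weaken the target to that modulus, and then the delicate point is not a single contraction constant but a bookkeeping of valuations of the differences $l_{m,i}^{(\alpha)}l_{n,j}^{(\alpha+2)}-l_{m,i}^{(\alpha+2)}l_{n,j}^{(\alpha)}$ across the two alternating phases, including a handful of exceptional index pairs that must be tracked separately (as in Lemma \ref{pi-L}).
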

They also proved the case $\alpha = 1$. Later, Xia \cite{Xia-2014} demonstrated that \eqref{conj-1} holds for $\alpha = 2$ and that \eqref{conj-2} holds for all $\alpha \geq 1$. However, \eqref{conj-1} is incorrect in general. For example, let $\alpha = 3$ and $n = 1$; we have
\begin{align*}
&\Delta_3(\lambda_{3}) = \Delta_3(3) = 19,\\
&\Delta_3(2^5 + \lambda_{5}) = \Delta_3(43) = 236491535,\\
&\Delta_3(\lambda_{5}) = \Delta_3(2^3 + \lambda_{3}) = \Delta_3(11) = 2653,\\
&\Delta_3(\lambda_{3})\Delta_3(2^5 + \lambda_{5}) - \Delta_3(\lambda_{5})\Delta_3(2^3 + \lambda_3) = 2^2 \cdot 1121575189 \not\equiv 0 \pmod{2^3},
\end{align*}
which indicates that \eqref{conj-1} does not hold for $\alpha = 3$. The correct version of this conjecture, which we prove, is given below in Theorem \ref{thm}.

In this paper, we prove this conjecture by following the method in \cite{Paule-Radu-2012}. We associate the family of congruences with a sequence of functions defined on the classical modular curve $X_0(28)$ (a curve of genus $2$ with cusp count $6$). Let
$$
F(q) := q^{\frac{1}{3}} \frac{\eta(2\tau)\eta(7\tau)}{\eta(\tau)^3\eta(14\tau)} = \sum_{n=0}^{\infty} \Delta_3(n) q^n.
$$
We note that we employ an unconventional $U$-sequence, since $F(q)/F(q^4)$ is not a modular function, and there is no $U$-sequence of functions defined on the classical modular curve $X_0(14)$ (a curve of genus $1$ with cusp count $4$) associated with the family of congruences.

The paper is organized as follows. In Section \ref{sec-pre} we introduce the necessary notations and definitions. In Section \ref{sec-lemma} we derive modular equations on $\Gamma_0(28)$. Section \ref{sec-main} is dedicated to the proof of the revised Conjecture \ref{mconj}.

\section{Preliminaries}\label{sec-pre}

In this paper, we use the following conventions: $\mathbb{N}^{\ast} = \{1,2,\dots\}$ denotes the set of positive integers. The complex upper half-plane is denoted by $\mathbb{H} := \{\tau \in \mathbb{C} : \Im(\tau) > 0\}$. We will also use the shorthand notation for the Dedekind eta function:
\begin{align*}
\eta_n(\tau) := \eta(n\tau), \qquad n \in \mathbb{Z}, \; \tau \in \mathbb{H}.
\end{align*}
Throughout this paper, for $x \in \mathbb{R}$ the symbol $\lfloor x \rfloor$ denotes the largest integer less than or equal to $x$, and $\lceil x \rceil$ denotes the smallest integer greater than or equal to $x$.

Let $f = \sum_{n \in \mathbb{Z}} a_n q^n$, $f \neq 0$, be such that $a_n = 0$ for almost all $n < 0$. We define the order of $f$ (with respect to $q$) as the smallest integer $N$ such that $a_N \neq 0$, and write $N = \operatorname{ord}_q(f)$. For instance, let $F = f \circ t = \sum_{n \in \mathbb{Z}} a_n t^n$ where $t = \sum_{n \geq 1} b_n q^n$; then the $t$-order of $F$ is defined to be the smallest integer $N$ such that $a_N \neq 0$, and we write $N = \operatorname{ord}_t(F)$.


\begin{definition}
For $f: \mathbb{H} \rightarrow \mathbb{C}$ and $m \in \mathbb{N}^{\ast}$, we define $U_m(f): \mathbb{H} \rightarrow \mathbb{C}$ by
\begin{align*}
U_m(f)(\tau) := \frac{1}{m} \sum_{\lambda=0}^{m-1} f\!\left(\frac{\tau+\lambda}{m}\right), \qquad \tau \in \mathbb{H}.
\end{align*}
\end{definition}
The operator $U_m$ is linear over $\mathbb{C}$; moreover, it is easy to verify that
\begin{align*}
U_{mn} = U_m \circ U_n = U_n \circ U_m, \qquad m, n \in \mathbb{N}^{\ast}.
\end{align*}
The operators $U_m$, introduced by Atkin and Lehner \cite{Atkin-Lehner-1970}, are closely related to Hecke operators. They typically arise in the context of partition congruences \cite{Andrews-1976}, mostly due to the following property: if
\begin{align*}
f(\tau) = \sum_{n=-\infty}^{\infty} f_n q^n \qquad (q = e^{2\pi i \tau}),
\end{align*}
then
\begin{align*}
U_m(f)(\tau) = \sum_{n=-\infty}^{\infty} f_{mn} q^n.
\end{align*}

\begin{definition}\label{def-A-B}
For $f: \mathbb{H} \rightarrow \mathbb{C}$ we define $U^{(0)}(f), U^{(1)}(f): \mathbb{H} \rightarrow \mathbb{C}$ by $U^{(0)}(f) := U_2(Af)$ and $U^{(1)}(f) := U_2(Bf)$, where
\[
A := \frac{\eta_8^4 \eta_2^2}{\eta_4^2 \eta_1^4}, \qquad
B := \frac{\eta_1^4 \eta_2^2 \eta_8^4}{\eta_4^{10}}.
\]
\end{definition}

\begin{definition}\label{r:defL}
We define the $U$-sequence $(L_{\alpha})_{\alpha \geq 0}$ by $L_0 := \frac{\eta_7 \eta_4^4 \eta_1}{\eta_{14} \eta_2^5}$ and for $\alpha \geq 1$:
\[
L_{2\alpha-1} := U^{(0)}(L_{2\alpha-2}), \qquad
L_{2\alpha} := U^{(1)}(L_{2\alpha-1}).
\]
\end{definition}

The proof of the following lemma is completely analogous to \cite[ p.~23 ]{Atkin-1967} and is therefore omitted.

\begin{lemma}\label{lem-L-alpha}
For $\alpha \in \mathbb{N}^{\ast}$, we have
\begin{align*}
L_{2\alpha-1} &= q \prod_{n=1}^{\infty} \frac{(1-q^{4n})^4 (1-q^{2n})^2}{(1-q^n)^4} \sum_{n=0}^{\infty} \Delta_3(2^{2\alpha-1}n + \lambda_{2\alpha-1}) q^n,\\[4pt]
L_{2\alpha} &= q \prod_{n=1}^{\infty} \frac{(1-q^{n})^4 (1-q^{4n})^4}{(1-q^{2n})^6} \sum_{n=0}^{\infty} \Delta_3(2^{2\alpha}n + \lambda_{2\alpha}) q^n.
\end{align*}
\end{lemma}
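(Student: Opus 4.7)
The plan is to prove both formulas simultaneously by induction on $\alpha$, exploiting the classical pulldown identity
\[
U_m\bigl(g(m\tau)\,h(\tau)\bigr)=g(\tau)\,U_m(h)(\tau),
\]
valid whenever $g$ is $1$-periodic. In $q$-notation this reads $U_m(g(q^m)h(q))=g(q)\,U_m(h)(q)$, and this is the mechanism that converts the action of $U^{(0)}$ and $U^{(1)}$ into extraction of arithmetic progressions of $\Delta_3$. Write
\[
G_1(q):=q\prod_{n\ge 1}\frac{(1-q^{4n})^4(1-q^{2n})^2}{(1-q^n)^4},\qquad G_2(q):=q\prod_{n\ge 1}\frac{(1-q^n)^4(1-q^{4n})^4}{(1-q^{2n})^6},
\]
so the two target identities amount to $L_{2\alpha-1}=G_1(q)\sum_{n\ge 0}\Delta_3(2^{2\alpha-1}n+\lambda_{2\alpha-1})q^n$ and $L_{2\alpha}=G_2(q)\sum_{n\ge 0}\Delta_3(2^{2\alpha}n+\lambda_{2\alpha})q^n$.

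The algebraic engine of the induction is the pair of eta identities
\[
A=\frac{G_1(q^2)}{G_2(q)}\qquad\text{and}\qquad B=\frac{G_2(q^2)}{q\,G_1(q)},
\]
which I would establish by writing $\eta_n=q^{n/24}\prod_k(1-q^{kn})$ and matching the fractional $q$-exponent and the infinite product on each side. (Quick check: the leading power of $q$ on the right of the first identity is $2+(-1)=1$, consistent with $\frac{1}{24}(32+4-8-4)=1$ for $A$; similarly $B$ has leading power $0$.) An analogous direct eta computation handles the base case $\alpha=1$: one verifies $AL_0=G_1(q^2)\cdot q^{-1}F(q)$, after which the pulldown identity gives $L_1=G_1(q)\cdot U_2(q^{-1}F)=G_1(q)\sum_{n\ge 0}\Delta_3(2n+1)q^n$, matching $\lambda_1=1$. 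A second application produces $L_2=G_2(q)\sum_{n\ge 0}\Delta_3(4n+3)q^n$, matching $\lambda_2=3$.

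For the inductive step, suppose the formula for $L_{2\alpha}$ holds. Multiplying by $A$ and invoking the first eta identity yields $AL_{2\alpha}=G_1(q^2)\sum_{n\ge 0}\Delta_3(2^{2\alpha}n+\lambda_{2\alpha})q^n$, and applying $U_2$ via the pulldown gives $L_{2\alpha+1}=G_1(q)\sum_{n\ge 0}\Delta_3(2^{2\alpha+1}n+\lambda_{2\alpha})q^n$; the arithmetic fact $\lambda_{2\alpha}=\lambda_{2\alpha+1}=(2^{2\alpha+1}+1)/3$ closes this half. Next, $BL_{2\alpha+1}=G_2(q^2)\cdot q^{-1}\sum_{n\ge 0}\Delta_3(2^{2\alpha+1}n+\lambda_{2\alpha+1})q^n$, and the $q^{-1}$ forces $U_2$ to select the odd-indexed terms, producing $L_{2\alpha+2}=G_2(q)\sum_{n\ge 0}\Delta_3(2^{2\alpha+2}n+2^{2\alpha+1}+\lambda_{2\alpha+1})q^n$; the identity $2^{2\alpha+1}+\lambda_{2\alpha+1}=(2^{2\alpha+3}+1)/3=\lambda_{2\alpha+2}$ then completes the induction.

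The only real obstacle is bookkeeping. Verifying $A=G_1(q^2)/G_2(q)$ and $B=G_2(q^2)/(q\,G_1(q))$ requires tracking both the fractional $q$-exponent $\frac{1}{24}\sum_n n\,e_n$ contributed by each $\eta_n^{e_n}$ and the matching of the $(1-q^{kn})$ factors across each equation; but once these are pinned down the inductive skeleton is forced. The $\lambda_\alpha$ recursion is a second, purely arithmetic, bookkeeping task, and the piecewise even/odd definition fits the alternating $U^{(0)},U^{(1)}$ sequence exactly, which is precisely the structural reason for the unconventional two-step $U$-sequence.
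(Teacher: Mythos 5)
Your proof is correct, and it is essentially the argument the paper has in mind: the paper omits the proof, deferring to Atkin's method, which is exactly this induction via the pulldown identity $U_2(g(q^2)h(q))=g(q)U_2(h)(q)$ combined with the eta-quotient verifications $A=G_1(q^2)/G_2(q)$, $B=G_2(q^2)/(qG_1(q))$, and $AL_0=G_1(q^2)\,q^{-1}F(q)$ (all of which check out, as does the $\lambda_\alpha$ bookkeeping $\lambda_{2\alpha}=\lambda_{2\alpha+1}$ and $2^{2\alpha+1}+\lambda_{2\alpha+1}=\lambda_{2\alpha+2}$). Your write-up supplies the details the paper leaves out, and no step fails.
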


\begin{definition}\label{t-p-y}
Let $t, p, p_0, y_0, p_1$ be functions defined on $\mathbb{H}$ as follows:
\begin{align*}
t &:= \frac{\eta_{28} \eta_4}{\eta_7 \eta_1}, ~~~~~~~~~~~~~~~~~~~~~
p := \frac{1}{2} \left( 1 + \frac{7 \eta_{14}^7 \eta_4^5 \eta_1^6}{\eta_{28}^3 \eta_7^2 \eta_2^{13}} \right), \\[4pt]
p_0 &:= \frac{\eta_7 \eta_4^7}{\eta_{28} \eta_1^7}, ~~~~~~~~~~~~~~~~~~~~~~~~~~~~~~~~~~~~~~~~~~
y_0 := \frac{7 \eta_{14}^5 \eta_4^5 \eta_2}{\eta_{28}^3 \eta_1^8}, ~~~~~~~~~~~~~~~~~
p_1 := \frac{1}{4} (p + 2p t + t) + t^2.
\end{align*}
\end{definition}


\section{Modular equation}
\label{sec-lemma}

Our proof of Radu and Sellers' conjecture relies on the identities in the Appendix. All these identities can be proved using modular function theory.

Regarding the valence formula in modular forms, Garvan has written a MAPLE package called ETA, which can prove Group I, Group III, and Group IV in the Appendix automatically. See
\begin{align}
\label{r:eta}
https://qseries.org/fgarvan/qmaple/ETA/
\end{align}
A tutorial for this package can be found in \cite{gtutorial}.

The identities in Group II of the Appendix cannot be proved directly, since $p_1$ is not an eta-quotient. However, we can use the ETA MAPLE package to prove that
\begin{align}
\label{r:u1}
U^{(0)}(t^3) &= p_0(256t^7 + 640t^6 + 864t^5 + 656t^4 + 276t^3 + 64t^2 + 7t) \nonumber \\
&\quad - y_0(32t^5 + 48t^4 + 28t^3 + 8t^2 + t); \\[4pt]
\label{r:u2}
U^{(0)}(t^4) &= p_0(1024t^9 + 3072t^8 + 4864t^7 + 4672t^6 + 2832t^5 + 1104t^4 + 260t^3 + 28t^2) \nonumber \\
&\quad - y_0(128t^7 + 256t^6 + 224t^5 + 112t^4 + 32t^3 + 4t^2); \\[4pt]
\label{r:u3}
U^{(0)}(yt) &= -p_0(14t + 7) + y_0(2t + 1); \\[4pt]
\label{r:u4}
U^{(0)}(yt^2) &= -p_0(56t^3 + 84t^2 + 28t) + y_0(8t^2 + 4t); \\[4pt]
\label{r:u5}
U^{(0)}(yt^3) &= -p_0(224t^5 + 448t^4 + 308t^3 + 84t^2 + 7t) + y_0(32t^4 + 36t^3 + 12t^2 + t);
\end{align}
where $y := 2p - 1$ is an eta-quotient. Then we deduce the identities in Group II using \eqref{r:u1}--\eqref{r:u5} along with the identities in Group I, since
\[
p_1 = \frac{1}{8} + \frac{1}{2}t + t^2 + \frac{1}{8}y + \frac{1}{4}yt.
\]
\begin{definition}\label{def-aj-t}
With $t = t(\tau)$ as in Definition \ref{t-p-y}, we define:
\begin{align*}
a_0(t) = -2t^2 - t, \qquad a_1(t) = -4t^2 - 2t.
\end{align*}
We define $s: \{0,1\} \times \{1,2\} \rightarrow \mathbb{Z}$ to be the unique function satisfying
\begin{align}\label{def-aj}
a_j(t) = \sum_{l=1}^2 s(j,l) \, 2^{\left\lfloor \frac{2l + j - 1}{2} \right\rfloor} t^l.
\end{align}
\end{definition}

\begin{lemma}\label{t-aj}
For $\lambda = 0$ or $1$, let
\begin{align*}
t_{\lambda}(\tau) := t\!\left(\frac{\tau + \lambda}{2}\right), \qquad \tau \in \mathbb{H}.
\end{align*}
Then, in the polynomial ring $\mathbb{C}(t)[X]$,
\begin{align*}
X^2 + a_1(t) X + a_0(t) = (X - t_0)(X - t_1).
\end{align*}
\end{lemma}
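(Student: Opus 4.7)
The plan is to verify the factorization by Vieta's formulas, reducing the lemma to the two identities $t_0 + t_1 = -a_1(t) = 4t^2+2t$ and $t_0 t_1 = a_0(t) = 2t^2+t$. The key reformulation is that both elementary symmetric functions in $t_0,t_1$ can be expressed through the $U_2$ operator: directly from the defining formula, $t_0 + t_1 = 2\, U_2(t)$ and $t_0^2 + t_1^2 = 2\, U_2(t^2)$, so Newton's identity $2 t_0 t_1 = (t_0+t_1)^2 - (t_0^2+t_1^2)$ gives $t_0 t_1 = 2\, U_2(t)^2 - U_2(t^2)$. Since $2\mid 28$, the operator $U_2$ preserves $\Gamma_0(28)$-modularity, and the two target identities become the following pair on $\Gamma_0(28)$:
\begin{equation*}
U_2(t) = 2t^2 + t, \qquad U_2(t^2) = 8t^4 + 8t^3 - t.
\end{equation*}

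Next I would verify these two polynomial identities by the same ETA-package strategy already used for \eqref{r:u1}--\eqref{r:u5}. Expanding $U_2(t)$ and $U_2(t^2)$ via the coset decomposition turns both sides of each identity into finite sums of eta-quotients. By the valence formula on $X_0(28)$ (a curve of genus $2$ with $6$ cusps), once one knows the cusp orders of each side at all six cusps, the identity reduces to matching a bounded number of $q$-expansion coefficients at $i\infty$; this is precisely the computation automated by Garvan's ETA Maple package \eqref{r:eta}. Equivalently, one could substitute $\tau \mapsto 2\tau$ in $t_0^2 + a_1(t) t_0 + a_0(t) = 0$ to rephrase the claim as a single eta-quotient identity on $\Gamma_0(56)$ relating $t(\tau)$ and $t(2\tau)$, which the package again handles mechanically.

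I expect the main obstacle to lie in bookkeeping the cusp orders of $U_2(t)$ and $U_2(t^2)$ at all six cusps of $X_0(28)$: the $U_2$ operator can shift cusp orders in non-obvious ways, and one must rule out hidden poles before the valence-formula argument applies. Once the cusp data are in hand, the polynomial candidates $2t^2 + t$ and $8t^4 + 8t^3 - t$ are essentially forced by matching leading terms of the $q$-expansion at $i\infty$, and the remaining verification is entirely mechanical.
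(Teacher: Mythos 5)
Your reduction via Vieta and Newton is sound as far as it goes: $t_0+t_1=2U_2(t)$ and $t_0t_1=2U_2(t)^2-U_2(t^2)$ are correct, and $U_2(t)=2t^2+t$ does hold (this is also how the paper handles the coefficient $a_1$). The step that fails is your second target identity: $U_2(t^2)$ is \emph{not} $8t^4+8t^3-t$. From $t=q+q^2+2q^3+3q^4+4q^5+6q^6+9q^7+13q^8+\cdots$ one gets $U_2(t^2)=q+5q^2+18q^3+55q^4+\cdots$, whereas $8t^4+8t^3-t=-q-q^2+6q^3+\cdots$, so the ETA-package verification you propose would reject the identity rather than confirm it. The correct identity is $U_2(t^2)=8t^4+8t^3+4t^2+t$, and feeding this back into your own formula gives $t_0t_1=2(2t^2+t)^2-(8t^4+8t^3+4t^2+t)=-(2t^2+t)$, not $+(2t^2+t)$.

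So the real obstruction is not the cusp-order bookkeeping you anticipated but a sign: the constant term of the quadratic satisfied by $t_0,t_1$ is $-(2t^2+t)$, i.e.\ the factorization holds only if $a_0(t)$ in Definition \ref{def-aj-t} is replaced by $-2t^2-t$. The source of the discrepancy is the prefactor $e^{2\pi i(\tau+\lambda)/2}=(-1)^\lambda q^{1/2}$ in $t_\lambda$: the product over $\lambda\in\{0,1\}$ contributes an overall factor $-1$, which the paper's own computation of $t_0t_1$ (where the prefactor is written as $q^{1/2}$ for both values of $\lambda$) also drops. For comparison, the paper evaluates $t_0t_1$ directly as the eta quotient $\eta_2^3\eta_{14}^3/(\eta_1^3\eta_7^3)$ rather than through $U_2(t^2)$; your Newton-identity route is a legitimate alternative and in fact exposes the sign problem, but as written your proposal asserts a false intermediate identity and therefore does not establish the lemma as stated (nor could any argument, since the stated constant term is wrong). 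Note that the sign of $a_0$ does not affect the $2$-adic estimates used later in the paper, but it does change the explicit recursions such as \eqref{r:y0t3}.
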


\begin{proof}
First we prove
\begin{align*}
a_0(t) = -\frac{\eta_2^3 \eta_{14}^3}{\eta_1^3 \eta_7^3} = -2t^2 - t.
\end{align*}
We have
\begin{align*}
a_0(t) &= t_0 t_1 \\
&= -\prod_{\lambda=0}^1 q^{1/2} \prod_{n=1}^{\infty} \frac{(1 - q^{14n})(1 - q^{2n})}{(1 - (-1)^{\lambda n} q^{7n/2})(1 - (-1)^{\lambda n} q^{n/2})} \\
&= -q \prod_{n=1}^{\infty} \frac{(1 - q^{14n})^3 (1 - q^{2n})^3}{(1 - q^{7n})^3 (1 - q^{n})^3} = -2t^2 - t,
\end{align*}
where the last equality was verified by Garvan's MAPLE package ETA \eqref{r:eta}.

For $a_1(t)$,
\begin{align*}
a_1(t) = -t_0 - t_1 
= -2 U_2(t) 
= -4t^2 - 2t,
\end{align*}
where the last equality was again verified by the ETA MAPLE package.
\end{proof}

\begin{lemma}\label{U2-j-jl2}
For $u: \mathbb{H} \rightarrow \mathbb{C}$ and $j \in \mathbb{Z}$,
\begin{align*}
U_2(u t^j) = -a_0(t) U_2(u t^{j-2}) - a_1(t) U_2(u t^{j-1}).
\end{align*}
\end{lemma}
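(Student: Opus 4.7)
The plan is to derive this identity directly from Lemma \ref{t-aj} by a short algebraic manipulation. Since $t_0$ and $t_1$ are the two roots of the monic polynomial $X^2 + a_1(t)X + a_0(t) \in \mathbb{C}(t)[X]$, each of them satisfies
\[
t_\lambda^2 + a_1(t)\, t_\lambda + a_0(t) = 0, \qquad \lambda \in \{0,1\}.
\]
Multiplying this quadratic relation by $t_\lambda^{j-2}$ (note $t_\lambda$ is not identically zero, so negative powers make sense as meromorphic functions on $\mathbb{H}$) yields, for every $j \in \mathbb{Z}$,
\[
t_\lambda^j \;=\; -a_1(t)\, t_\lambda^{j-1} \;-\; a_0(t)\, t_\lambda^{j-2}.
\]

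Next, I would unfold the definition of $U_2$. Writing $u_\lambda(\tau) := u\!\left(\tfrac{\tau+\lambda}{2}\right)$, one has
\[
U_2(u t^j)(\tau) \;=\; \frac{1}{2}\sum_{\lambda=0}^{1} u_\lambda(\tau)\, t_\lambda(\tau)^j.
\]
The crucial observation is that $a_0(t)$ and $a_1(t)$ are polynomials in $t = t(\tau)$, hence depend on $\tau$ only and are independent of the summation variable $\lambda$. Substituting the recurrence for $t_\lambda^j$ above and pulling $a_0(t)$, $a_1(t)$ out of the sum gives
\[
U_2(u t^j)(\tau) \;=\; -a_1(t)\cdot\frac{1}{2}\sum_{\lambda=0}^{1} u_\lambda(\tau)\, t_\lambda(\tau)^{j-1} \;-\; a_0(t)\cdot\frac{1}{2}\sum_{\lambda=0}^{1} u_\lambda(\tau)\, t_\lambda(\tau)^{j-2},
\]
which is exactly $-a_1(t) U_2(u t^{j-1}) - a_0(t) U_2(u t^{j-2})$, as claimed.

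There is essentially no obstacle: the entire content of the lemma has already been absorbed into Lemma \ref{t-aj}, which is where the nontrivial eta-quotient identities for $t_0 t_1$ and $t_0 + t_1$ (verified via Garvan's ETA package) were established. The present lemma is simply the Newton-style recurrence for power sums of the two roots of a quadratic, transferred through the linear operator $U_2$. The only point worth flagging is that the identity is stated for all $j \in \mathbb{Z}$, which is legitimate because $t$ is a nonzero meromorphic function so $t_\lambda^{-1}$ is well defined; the argument above works uniformly in $j$.
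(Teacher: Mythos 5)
Your proof is correct and follows exactly the paper's own argument: both use Lemma \ref{t-aj} to get $t_\lambda^2 + a_1(t)t_\lambda + a_0(t) = 0$, multiply by $u_\lambda t_\lambda^{j-2}$, and sum over $\lambda \in \{0,1\}$ using that $a_0(t)$, $a_1(t)$ do not depend on $\lambda$. No differences worth noting.
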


\begin{proof}
For $\lambda \in \{0,1\}$, Lemma \ref{t-aj} implies
\begin{align*}
t_{\lambda}^2 + a_1(t) t_{\lambda} + a_0(t) = 0.
\end{align*}
Multiplying both sides by $u_{\lambda} t_{\lambda}^{j-2}$, where $u_{\lambda}(\tau) := u\!\left(\frac{\tau + \lambda}{2}\right)$, gives
\begin{align*}
u_{\lambda} t_{\lambda}^j + a_1(t) u_{\lambda} t_{\lambda}^{j-1} + a_0(t) u_{\lambda} t_{\lambda}^{j-2} = 0.
\end{align*}
Summing both sides over all $\lambda \in \{0,1\}$ completes the proof of the lemma.
\end{proof}

According to Lemma \ref{U2-j-jl2}, we can calculate $U^{(0)}(t^k)$, $U^{(0)}(p_1 t^k)$, $U^{(1)}(y_0 t^k)$, and $U^{(1)}(p_0 t^k)$ using the identities in the Appendix for all $k \geq 1$ recursively. For example,
\begin{align}
\label{r:y0t3}
U^{(1)}(y_0 t^3) &= (4t^2 + 2t) U^{(1)}(y_0 t^2) + (2t^2 + t) U^{(1)}(y_0 t) \nonumber \\
&= p_1 \bigl( 2^{17} t^{13} + 2^{19} t^{12} + 125 \cdot 2^{13} t^{11} + 315 \cdot 2^{12} t^{10} + 2261 \cdot 2^9 t^9 + 1505 \cdot 2^9 t^8 \nonumber \\
&\qquad + 6031 \cdot 2^6 t^7 + 1131 \cdot 2^7 t^6 + 4953 \cdot 2^3 t^5 + 1875 \cdot 2^2 t^4 + 441 \cdot 2 t^3 + 49 t^2 \bigr) \nonumber \\
&\quad - \bigl( 2^{17} t^{15} + 9 \cdot 2^{16} t^{14} + 145 \cdot 2^{13} t^{13} + 93 \cdot 2^{14} t^{12} + 2693 \cdot 2^9 t^{11} + 3591 \cdot 2^8 t^{10} \nonumber \\
&\qquad + 7165 \cdot 2^6 t^9 + 5305 \cdot 2^5 t^8 + 5675 \cdot 2^3 t^7 + 1035 \cdot 2^3 t^6 + 463 \cdot 2 t^5 + 13 \cdot 2^2 t^4 \bigr).
\end{align}

\section{Radu and Sellers' conjecture}
\label{sec-main}

\begin{lemma}\label{order}
Let $v_1, v_2, u: \mathbb{H} \rightarrow \mathbb{C}$ and $l \in \mathbb{Z}$. Suppose that for $k = l$ or $k = l+1$ there exist polynomials $p_k^{(1)}(t), p_k^{(2)}(t) \in \mathbb{Z}[t]$ such that
\begin{align}\label{ord-1}
U_2(u t^k) = v_1 p_k^{(1)}(t) + v_2 p_k^{(2)}(t)
\end{align}
and
\begin{align}\label{ord-2}
\operatorname{ord}_t\!\left(p_k^{(i)}(t)\right) \geq \left\lceil \frac{k + s_i}{2} \right\rceil, \qquad i \in \{1,2\},
\end{align}
for some fixed integers $s_1$ and $s_2$. Then there exist families of polynomials $p_k^{(1)}(t), p_k^{(2)}(t) \in \mathbb{Z}[t]$ for all integers $k \geq l$ such that \eqref{ord-1} and \eqref{ord-2} hold.
\end{lemma}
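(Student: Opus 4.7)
The plan is to prove the lemma by induction on $k \geq l$, using the two values $k = l$ and $k = l+1$ as base cases (which are provided by hypothesis) and Lemma~\ref{U2-j-jl2} as the recurrence that drives the induction step.

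For the inductive step, suppose $k \geq l+2$ and that the claim holds for $k-2$ and $k-1$. By Lemma~\ref{U2-j-jl2},
\begin{align*}
U_2(ut^k) &= -a_0(t)U_2(ut^{k-2}) - a_1(t)U_2(ut^{k-1}) \\
&= -a_0(t)\bigl(v_1 p_{k-2}^{(1)}(t) + v_2 p_{k-2}^{(2)}(t)\bigr) - a_1(t)\bigl(v_1 p_{k-1}^{(1)}(t) + v_2 p_{k-1}^{(2)}(t)\bigr),
\end{align*}
so grouping by $v_1, v_2$ suggests the definition
\begin{equation*}
p_k^{(i)}(t) := -a_0(t)\,p_{k-2}^{(i)}(t) - a_1(t)\,p_{k-1}^{(i)}(t) \in \mathbb{Z}[t],\quad i \in \{1,2\},
\end{equation*}
since $a_0(t) = 2t^2 + t$ and $a_1(t) = -4t^2 - 2t$ both have integer coefficients.

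It remains to verify the order bound $\ord_t\bigl(p_k^{(i)}(t)\bigr) \geq \lceil (k+s_i)/2 \rceil$. Since $\ord_t(a_0(t)) = \ord_t(a_1(t)) = 1$, the inductive hypothesis gives
\begin{equation*}
\ord_t\bigl(a_0(t)p_{k-2}^{(i)}(t)\bigr) \geq 1 + \left\lceil \tfrac{k-2+s_i}{2}\right\rceil = \left\lceil \tfrac{k+s_i}{2}\right\rceil,
\end{equation*}
and
\begin{equation*}
\ord_t\bigl(a_1(t)p_{k-1}^{(i)}(t)\bigr) \geq 1 + \left\lceil \tfrac{k-1+s_i}{2}\right\rceil \geq \left\lceil \tfrac{k+s_i}{2}\right\rceil,
\end{equation*}
where the last inequality is checked by splitting on the parity of $k + s_i$ (in both parities one directly computes $1 + \lceil (k-1+s_i)/2 \rceil \geq \lceil (k+s_i)/2 \rceil$). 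The minimum of the two lower bounds dominates $\ord_t(p_k^{(i)}(t))$, yielding the required inequality.

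This is essentially a bookkeeping argument: the key mechanism is already contained in Lemma~\ref{U2-j-jl2}, and the factors $a_0(t), a_1(t)$ each carry a factor of $t$, exactly the amount needed to push the order up by one at each application of the recurrence. I expect no serious obstacle; the only subtle point is that passing from $k-1$ to $k$ increases the required ceiling by either $0$ or $1$ depending on the parity of $k + s_i$, and one must confirm that a single factor of $t$ suffices in both cases, which the parity computation above settles.
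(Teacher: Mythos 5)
Your proof is correct and follows essentially the same route as the paper: induction via the recurrence of Lemma~\ref{U2-j-jl2}, defining $p_k^{(i)}$ by that recurrence and using the fact that $a_0(t)$ and $a_1(t)$ each have $t$-order $1$ to absorb the increase in the ceiling bound. The paper phrases this by factoring out $a_j(t)t^{-1}$ and reindexing the coefficient sums, but the content is identical to your parity check.
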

\begin{proof}
Let $N > l+1$ be an integer and assume by induction that there exist families of polynomials $p_k^{(i)}(t)$, $i \in \{1,2\}$, such that \eqref{ord-1} and \eqref{ord-2} hold for $l \leq k \leq N-1$. Suppose
\begin{align*}
p_k^{(i)}(t) = \sum_{n \geq \left\lceil \frac{k + s_i}{2} \right\rceil} c_i(k,n) \, t^n, \qquad l \leq k \leq N-1,
\end{align*}
with integers $c_i(k,n)$. Applying Lemma \ref{U2-j-jl2} we obtain:
\begin{align*}
U_2(u t^N) &= -\sum_{j=0}^1 a_j(t) \, U_2(u t^{N+j-2}) \\
&= -\sum_{j=0}^1 a_j(t) \sum_{i=1}^2 v_i \sum_{n \geq \left\lceil \frac{N+j-2+s_i}{2} \right\rceil} c_i(N+j-2,n) \, t^n \\
&= -\sum_{i=1}^2 v_i \sum_{j=0}^1 a_j(t) t^{-1} \sum_{n \geq \left\lceil \frac{N+j+s_i}{2} \right\rceil} c_i(N+j-2,n-1) \, t^n.
\end{align*}
Recalling the fact that $a_j(t) t^{-1}$ for $0 \leq j \leq 1$ is a polynomial in $t$, this determines polynomials $p_N^{(i)}(t)$ with the desired properties.
\end{proof}

\begin{lemma}\label{U-power}
Let $v_1, v_2, u: \mathbb{H} \rightarrow \mathbb{C}$ and $l \in \mathbb{Z}$. Suppose that for $k = l$ or $k = l+1$ there exist polynomials $p_k^{(i)}(t) \in \mathbb{Z}[t]$, $i \in \{1,2\}$, such that
\begin{align}\label{power-1}
U_2(u t^k) = v_1 p_k^{(1)}(t) + v_2 p_k^{(2)}(t)
\end{align}
where
\begin{align}\label{power-2}
p_k^{(i)}(t) = \sum_n c_i(k,n) \, 2^{\left\lfloor \frac{2n - k + \gamma_i}{2} \right\rfloor} t^n,
\end{align}
with integers $\gamma_i$ and $c_i(k,n)$. Then there exist families of polynomials $p_k^{(i)}(t) \in \mathbb{Z}[t]$ for all integers $k \geq l$, of the form \eqref{power-2}, for which property \eqref{power-1} holds.
\end{lemma}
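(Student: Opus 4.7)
The plan is to prove Lemma \ref{U-power} by induction on $k$, mirroring the structure of the proof of Lemma \ref{order} but replacing the $t$-order condition with a 2-adic valuation condition. The base cases $k=l$ and $k=l+1$ are given by hypothesis. Assume inductively that polynomials $p_k^{(i)}(t)\in\mathbb{Z}[t]$ of the shape \eqref{power-2} exist for all $l\leq k\leq N-1$, where $N\geq l+2$. Applying Lemma \ref{U2-j-jl2} with index $N$ gives
\[
U_2(ut^N)=-\sum_{j=0}^{1}a_j(t)\,U_2(ut^{N+j-2}),
\]
so substituting the two inductive expressions of the form \eqref{power-1} on the right leads to the natural candidate
\[
p_N^{(i)}(t):=-\sum_{j=0}^{1}a_j(t)\,p_{N+j-2}^{(i)}(t)\in\mathbb{Z}[t],\qquad i\in\{1,2\}.
\]

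What must be checked is that each $p_N^{(i)}(t)$ still admits the prescribed 2-adic weighted expansion \eqref{power-2}. Using \eqref{def-aj} to write $a_j(t)=\sum_{l=1}^{2}s(j,l)\,2^{\lfloor(2l+j-1)/2\rfloor}t^l$ together with the inductive form of $p_{N+j-2}^{(i)}(t)$, the coefficient of $t^n$ in $a_j(t)\,p_{N+j-2}^{(i)}(t)$ is an integer combination of terms carrying the 2-adic factor
\[
2^{\lfloor(2l+j-1)/2\rfloor+\lfloor(2(n-l)-(N+j-2)+\gamma_i)/2\rfloor}.
\]
Pulling $l$ out of each floor collapses this exponent to $n+\lfloor(j-1)/2\rfloor+\lfloor(\gamma_i-N-j+2)/2\rfloor$. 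A brief parity check in the two cases $j=0$ and $j=1$ shows that this is always at least $n+\lfloor(\gamma_i-N)/2\rfloor=\lfloor(2n-N+\gamma_i)/2\rfloor$: for $j=0$ one has the exact equality $\lfloor -1/2\rfloor+\lfloor(\gamma_i-N+2)/2\rfloor=\lfloor(\gamma_i-N)/2\rfloor$, and for $j=1$ the bound follows from monotonicity of the floor. This yields the required divisibility of the coefficient of $t^n$ in $p_N^{(i)}(t)$ and closes the induction.

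The argument is not conceptually deep: the essential content is careful bookkeeping with floor functions, and the crux is that Definition \ref{def-aj-t} has been arranged precisely so that the 2-adic exponent $\lfloor(2l+j-1)/2\rfloor$ attached to $t^l$ inside $a_j(t)$ absorbs exactly the shift $k\mapsto k-1$ or $k\mapsto k-2$ in the target exponent $\lfloor(2n-k+\gamma_i)/2\rfloor$. The only mild obstacle is distinguishing the parity of $2n-N+\gamma_i$ cleanly when summing the two floor contributions; once that is handled, the lemma follows in the same recursive fashion as Lemma \ref{order}.
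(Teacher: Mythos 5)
Your proposal is correct and follows essentially the same route as the paper: induction using Lemma \ref{U2-j-jl2} and the decomposition \eqref{def-aj}, with the only difference being that you verify the exponent bound by extracting integer parts and splitting on $j\in\{0,1\}$, whereas the paper applies the inequality $\lfloor A/2\rfloor+\lfloor B/2\rfloor\geq\lfloor(A+B-1)/2\rfloor$ in one step; both yield $\lfloor(2n-N+\gamma_i)/2\rfloor$. No gaps.
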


\begin{proof}
Suppose that for an integer $N > l+1$ there exist families of polynomials $p_k^{(i)}(t)$, $i \in \{1,2\}$, of the form \eqref{power-2} satisfying property \eqref{power-1} for $l \leq k \leq N-1$. We proceed by mathematical induction on $N$. Applying Lemma \ref{U2-j-jl2} and using the induction base \eqref{power-1} and \eqref{power-2} we obtain:
\begin{align*}
U_2(u t^N) = -\sum_{j=0}^1 a_j(t) \sum_{i=1}^2 v_i \sum_n c_i(N+j-2,n) \, 2^{\left\lfloor \frac{2n - (N+j-2) + \gamma_i}{2} \right\rfloor} t^n.
\end{align*}
Utilizing \eqref{def-aj} this rewrites as:
\begin{align*}
U_2(u t^N) &= -\sum_{j=0}^1 \sum_{l=1}^2 s(j,l) \, 2^{\left\lfloor \frac{2l + j - 1}{2} \right\rfloor} t^l \sum_{i=1}^2 v_i \sum_n c_i(N+j-2,n) \, 2^{\left\lfloor \frac{2n - (N+j-2) + \gamma_i}{2} \right\rfloor} t^n \\
&= -\sum_{i=1}^2 v_i \sum_{j=0}^1 \sum_{l=1}^2 \sum_n s(j,l) \, c_i(N+j-2,n-l) \, 2^{\left\lfloor \frac{2(n-l) - (N+j-2) + \gamma_i}{2} \right\rfloor + \left\lfloor \frac{2l + j - 1}{2} \right\rfloor} t^n.
\end{align*}
The induction step is completed by simplifying the exponent of $2$ as follows:
\begin{align*}
&\left\lfloor \frac{2(n-l) - (N+j-2) + \gamma_i}{2} \right\rfloor + \left\lfloor \frac{2l + j - 1}{2} \right\rfloor \\
&\qquad \geq \left\lfloor \frac{2(n-l) - (N+j-2) + \gamma_i + 2l + j - 2}{2} \right\rfloor \\
&\qquad = \left\lfloor \frac{2n - N + \gamma_i}{2} \right\rfloor.
\end{align*}
Thus for $N \geq l$, $U_2(u t^N)$ has the desired expression.
\end{proof}

\begin{definition}
A map $a: \mathbb{Z} \rightarrow \mathbb{Z}$ is called a discrete function if it has finite support. A map $a: \mathbb{Z} \times \mathbb{Z} \rightarrow \mathbb{Z}$ is called a discrete array if for each $i \in \mathbb{Z}$ the map $a(i, -): \mathbb{Z} \rightarrow \mathbb{Z}$, $j \mapsto a(i,j)$, has finite support.
\end{definition}

\begin{lemma}\label{U-fundamental}
Given $U^{(0)}(f)$ and $U^{(1)}(f)$ as in Definition \ref{def-A-B}, and $p_0, p_1, y_0$ as in Definition \ref{t-p-y}. Then there exist discrete arrays $a_i$, $b_i$, $c_i$, and $d_i$, with $i \in \{0,1\}$, such that the following relations hold for all $k \in \mathbb{N}^{\ast}$:
\begin{align}
&U^{(0)}(t^k) = p_0 \sum_{n \geq \lceil \frac{k-1}{2} \rceil} a_0(k,n) \, 2^{\left\lfloor \frac{2n - k + 2}{2} \right\rfloor} t^n + y_0 \sum_{n \geq \lceil \frac{k-1}{2} \rceil} a_1(k,n) \, 2^{\left\lfloor \frac{2n - k + 2}{2} \right\rfloor} t^n, \label{U-0-t} \\[4pt]
&U^{(0)}(p_1 t^k) = p_0 \sum_{n \geq \lceil \frac{k+1}{2} \rceil} b_0(k,n) \, 2^{\left\lfloor \frac{2n - k + 1}{2} \right\rfloor} t^n + y_0 \sum_{n \geq \lceil \frac{k+1}{2} \rceil} b_1(k,n) \, 2^{\left\lfloor \frac{2n - k - 1}{2} \right\rfloor} t^n, \label{U-0-p1t} \\[4pt]
&U^{(1)}(p_0 t^k) = p_1 \sum_{n \geq \lceil \frac{k}{2} \rceil} c_0(k,n) \, 2^{\left\lfloor \frac{2n - k + 1}{2} \right\rfloor} t^n + \sum_{n \geq \lceil \frac{k+4}{2} \rceil} c_1(k,n) \, 2^{\left\lfloor \frac{2n - k - 3}{2} \right\rfloor} t^n, \label{U-1-p0t} \\[4pt]
&U^{(1)}(y_0 t^k) = p_1 \sum_{n \geq \lceil \frac{k}{2} \rceil} d_0(k,n) \, 2^{\left\lfloor \frac{2n - k}{2} \right\rfloor} t^n + \sum_{n \geq \lceil \frac{k+4}{2} \rceil} d_1(k,n) \, 2^{\left\lfloor \frac{2n - k - 4}{2} \right\rfloor} t^n. \label{U-1-y0t}
\end{align}
\end{lemma}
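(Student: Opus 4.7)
The plan is to prove each of the four identities \eqref{U-0-t}--\eqref{U-1-y0t} by a single uniform strategy: recast the left--hand side in the form $U_2(u\,t^k)$ covered by Lemma \ref{order} and Lemma \ref{U-power}, verify the base cases $k=1,2$ from the Appendix, and then apply those two lemmas simultaneously. Since by Definition \ref{def-A-B} we have $U^{(0)}(f)=U_2(Af)$ and $U^{(1)}(f)=U_2(Bf)$, every left--hand side of \eqref{U-0-t}--\eqref{U-1-y0t} has the shape $U_2(u\,t^k)$ for a suitable $u\in\{A,\,A p_1,\,B p_0,\,B y_0\}$. For \eqref{U-0-t} and \eqref{U-0-p1t} the spanning pair is $(v_1,v_2)=(p_0,y_0)$; for \eqref{U-1-p0t} and \eqref{U-1-y0t} it is $(v_1,v_2)=(p_1,1)$. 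Reading off the claimed $t$--orders $\lceil (k+s_i)/2\rceil$ and the claimed $2$--valuations $\lfloor(2n-k+\gamma_i)/2\rfloor$ fixes, for each of the four identities, the constants $s_i,\gamma_i$ to which the two lemmas will be applied.

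First I would pin down the base cases. The Appendix supplies closed formulas for $U^{(0)}(t)$, $U^{(0)}(t^2)$, $U^{(0)}(p_1 t)$, $U^{(0)}(p_1 t^2)$, $U^{(1)}(p_0 t)$, $U^{(1)}(p_0 t^2)$, $U^{(1)}(y_0 t)$, and $U^{(1)}(y_0 t^2)$. Equivalent data for $U^{(0)}(t^k)$ with $k\le 4$ can also be read off from \eqref{r:u1}--\eqref{r:u2} together with $y=2p-1$ and the definition of $p_1$. For each of the four identities I would expand these base cases in terms of $\{p_0,y_0\}$ or $\{p_1,1\}$, as appropriate, collect the coefficient of each $t^n$, and verify by inspection that (a) the minimal exponent $n$ matches the lower bound $\lceil(k+s_i)/2\rceil$ and (b) the integer coefficient is divisible by $2^{\lfloor(2n-k+\gamma_i)/2\rfloor}$, with the predicted integer quotient $a_i(k,n)$, $b_i(k,n)$, $c_i(k,n)$, $d_i(k,n)$.

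With the base cases in hand, I would invoke Lemma \ref{order} and Lemma \ref{U-power} for each of the four identities. Both lemmas are driven by the same recurrence from Lemma \ref{U2-j-jl2},
\[
U_2(u\,t^N)=-a_0(t)\,U_2(u\,t^{N-2})-a_1(t)\,U_2(u\,t^{N-1}),
\]
and $a_0(t)/t$, $a_1(t)/t\in\mathbb{Z}[t]$ by Definition \ref{def-aj-t}, so the single recurrence simultaneously propagates both the $t$--order lower bound and the $2$--valuation lower bound from $k=1,2$ to all $k\ge 1$. Since the propagated output has exactly the form in \eqref{U-0-t}--\eqref{U-1-y0t}, defining $a_i(k,n)$, $b_i(k,n)$, $c_i(k,n)$, $d_i(k,n)$ as the resulting integer coefficients yields discrete arrays with the required support properties.

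The main obstacle is the bookkeeping for the base cases: one must verify, simultaneously, that the Appendix identities deliver both the correct starting $t$--order and the correct starting $2$--adic valuations for each coefficient, against four different triples $(s_1,s_2,\gamma_1,\gamma_2)$ (note in particular the asymmetry in \eqref{U-0-p1t}, \eqref{U-1-p0t}, \eqref{U-1-y0t}, where the two summands carry different exponents of $2$). A secondary subtlety is that the applications of Lemmas \ref{order} and \ref{U-power} must be carried out with the same recurrence and the same choice of $u$, so that the polynomial produced at step $N$ satisfies \emph{both} bounds rather than two polynomials satisfying them separately; this is automatic because both proofs use the same induction, but it is worth flagging. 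Once these verifications are carried out, the conclusion for every $k\in\mathbb{N}^{\ast}$ follows by induction.
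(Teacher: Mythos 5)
Your proposal is correct and follows essentially the same route as the paper: read the base cases $k=1,2$ for each of the four identities off the eight Appendix relations (Groups I--IV), then propagate to all $k\in\mathbb{N}^{\ast}$ by Lemmas \ref{order} and \ref{U-power}. The subtlety you flag --- that the $t$-order bound and the $2$-valuation bound must be realized by the \emph{same} polynomials, which holds because both lemmas run the same recurrence from Lemma \ref{U2-j-jl2} --- is left implicit in the paper but is resolved exactly as you describe.
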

\begin{proof}
The Appendix lists eight fundamental relations. The two fundamental relations of Group I fit the pattern of relation \eqref{U-0-t} for two values of $k$. The same observation applies to Groups II, III, and IV with regard to relations \eqref{U-0-p1t}, \eqref{U-1-y0t}, and \eqref{U-1-p0t}, respectively. Applying Lemmas \ref{order} and \ref{U-power}, we immediately prove the statement for all $k \in \mathbb{N}^{\ast}$.
\end{proof}

We verify that
\begin{align}
\label{r:L1id}
L_1 = U^{(0)}(L_0) = -4p_0 t + y_0 t,
\end{align}
using Garvan's MAPLE package ETA \eqref{r:eta}. We then note that $p$ and $p_1$ have expansions in powers of $q$ with coefficients in $\mathbb{Z}$.

Since $(1 - q^k)^2 \equiv 1 - q^{2k} \pmod{2}$, we have $\eta_k^2 \equiv \eta_{2k} \pmod{2}$. Then
\begin{align*}
2p = 1 + 7 \frac{\eta_{14}^7 \eta_4^5 \eta_1^6}{\eta_{28}^3 \eta_7^2 \eta_2^{13}}
\equiv 1 + 7 \frac{\eta_{14}^7 \eta_4^5 \eta_2^3}{\eta_{28}^3 \eta_{14} \eta_2^{13}}
\equiv 1 + 7 \frac{\eta_{14}^6 \eta_4^5}{\eta_{28}^3 \eta_2^{10}}
\equiv 8 \equiv 0 \pmod{2},
\end{align*}
which implies that the function $p$ has expansions in powers of $q$ with coefficients in $\mathbb{Z}$.

We now have that the coefficients of $4p_1 = p + 2p t + t + 4t^2$ are integral. Hence, by the relations for $U^{(1)}(p_0 t^2)$ given in the Appendix,
\[
U^{(1)}(p_0 t^2) = p_1 t + 4p_1 f(t) + g(t),
\]
where $f(t)$ and $g(t)$ are polynomials in $t$ with integral coefficients, we see that $p_1 t$ has expansions in powers of $q$ with coefficients in $\mathbb{Z}$ as does $p_1$. Furthermore, using \eqref{r:L1id} and combining the relations for $U^{(1)}(p_0 t)$, $U^{(1)}(y_0 t)$, $U^{(0)}(p_1 t)$ given in the Appendix together with $U^{(0)}(t^4)$ given in \eqref{r:u2}, we now state the following representation.

\begin{lemma}\label{r:L123lm}
Given $L_\alpha$ in Definition \ref{r:defL}, we have
\begin{align}
L_2 &= 3p_1 t - 2t^4 + 4f(p_1, t), \label{eq-L2} \\
L_3 &= y_0 t + 2y_0 t^2 + 4g(p_0, y_0, t), \nonumber
\end{align}
for some polynomials $f$ and $g$ with integral coefficients.
\end{lemma}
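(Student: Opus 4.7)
The plan is to unwind the definition of the $U$-sequence (Definition \ref{r:defL}) starting from the identity \eqref{r:L1id}, $L_1 = -4 p_0 t + y_0 t$, and compute $L_2 = U^{(1)}(L_1)$ and $L_3 = U^{(0)}(L_2)$ by substituting the explicit Appendix identities and carefully reducing modulo $4$, using the $2$-adic structure guaranteed by Lemma \ref{U-fundamental}.

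For $L_2$, linearity gives $L_2 = -4 U^{(1)}(p_0 t) + U^{(1)}(y_0 t)$. Lemma \ref{U-fundamental} applied to \eqref{U-1-p0t} with $k = 1$ shows that every coefficient of $U^{(1)}(p_0 t)$ has $2$-adic valuation at least $1$, so $-4 U^{(1)}(p_0 t) \in 8 \mathbb{Z}[p_1, t]$ and vanishes modulo $4$. For $U^{(1)}(y_0 t)$, the floor exponents in \eqref{U-1-y0t} force valuation at least $2$ for every monomial except the $p_1 t$ and $t^4$ monomials. Reading off the leading coefficients for these two monomials from the Group IV identity and reducing modulo $4$ gives the residues $3$ and $-2$ respectively. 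Hence $L_2 = 3 p_1 t - 2 t^4 + 4 f(p_1, t)$ for some $f \in \mathbb{Z}[p_1, t]$; moreover $f$ is affine linear in $p_1$ since both Group III and Group IV formulas are.

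For $L_3 = U^{(0)}(L_2) = 3 U^{(0)}(p_1 t) - 2 U^{(0)}(t^4) + 4 U^{(0)}(f(p_1, t))$, equation \eqref{r:u2} shows that every coefficient of $U^{(0)}(t^4)$ is even, so $-2 U^{(0)}(t^4) \in 4 \mathbb{Z}[p_0, y_0, t]$. Since $f$ is affine in $p_1$, the term $4 U^{(0)}(f)$ expands through Group II (for $U^{(0)}(p_1 t^k)$) and Group I together with \eqref{r:u1}, \eqref{r:u2} (for $U^{(0)}(t^k)$) into an element of $4 \mathbb{Z}[p_0, y_0, t]$. It remains to reduce $3 U^{(0)}(p_1 t)$ modulo $4$. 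The floor exponents in \eqref{U-0-p1t} ensure that the $p_0$-coefficient at $t^n$ has valuation $\geq n$ and the $y_0$-coefficient at $t^n$ has valuation $\geq n - 1$; combined with the specific parities of the leading Appendix entries (an even $p_0 t$ coefficient, a $y_0 t$ coefficient $\equiv 3 \pmod 4$, and an odd $y_0 t^2$ coefficient), this gives $U^{(0)}(p_1 t) \equiv 3 y_0 t + 2 y_0 t^2 \pmod 4$. Multiplying by $3$ yields $3 U^{(0)}(p_1 t) \equiv y_0 t + 2 y_0 t^2 \pmod 4$, and thus $L_3 = y_0 t + 2 y_0 t^2 + 4 g(p_0, y_0, t)$ with $g \in \mathbb{Z}[p_0, y_0, t]$.

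The main obstacle is the explicit mod-$4$ bookkeeping of the handful of leading coefficients appearing in the Appendix identities for $U^{(1)}(y_0 t)$ and $U^{(0)}(p_1 t)$. In particular, one must verify that the $p_0 t$ entry of $U^{(0)}(p_1 t)$ is even (so its contribution to $L_3$ vanishes modulo $4$), and that the leading $p_1 t$ and $t^4$ coefficients of $U^{(1)}(y_0 t)$ have the correct residues $3$ and $-2 \pmod 4$. Given these Appendix data, the remainder is direct substitution.
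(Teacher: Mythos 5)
Your proposal follows the same route as the paper, which obtains the lemma precisely by expanding $L_2=-4U^{(1)}(p_0t)+U^{(1)}(y_0t)$ and $L_3=U^{(0)}(L_2)$ through the Appendix identities (together with \eqref{r:u2} for $U^{(0)}(t^4)$) and reducing modulo $4$; your residues $3p_1t-2t^4$ and $y_0t+2y_0t^2$ are correct. Two small repairs are needed in your write-up. First, for $U^{(1)}(y_0t)$ the floor exponents in \eqref{U-1-y0t} with $k=1$ only give valuation $\geq n-1$ for the $p_1t^n$ terms and $\geq n-3$ for the $t^n$ terms, so besides $p_1t$ and $t^4$ the monomials $p_1t^2$ and $t^3$ are \emph{not} forced to vanish modulo $4$ by the general bound; you must read off their explicit coefficients $105\cdot 2^2$ and $-9\cdot 2^2$ from the Appendix (and note that $-4U^{(1)}(p_0t)$ contributes only multiples of $16$ there), both of which happen to be divisible by $4$. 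Second, the identity for $U^{(1)}(y_0t)$ that you invoke is in Group III, not Group IV. With these points fixed, your argument is exactly the computation the paper intends, including the correct observation that the $p_0$-part of $U^{(0)}(p_1t)$ dies modulo $4$ because its $t$-coefficient $2^3$ exceeds the guaranteed valuation $1$.
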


\begin{lemma}\label{L-alpha}
For $\alpha \geq 1$ there exist integers $d_{n,i}^{(\alpha)}$, $i = 0, 1, 2, 3$, such that
\begin{align}\label{L-odd}
L_{2\alpha-1} = p_0 \sum_{k=1}^{\infty} d_{k,0}^{(2\alpha-1)} \, 2^{k-1} t^k + y_0 \sum_{k=1}^{\infty} d_{k,2}^{(2\alpha-1)} \, 2^{k-1} t^k,
\end{align}
and
\begin{align}\label{L-even}
L_{2\alpha} = p_1 \sum_{k=1}^{\infty} d_{k,1}^{(2\alpha)} \, 2^{k-1} t^k + \sum_{k=3}^{\infty} d_{k,3}^{(2\alpha)} \, 2^{k-3} t^k.
\end{align}
\end{lemma}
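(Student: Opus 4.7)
The plan is to prove \eqref{L-odd} and \eqref{L-even} simultaneously by induction on $\alpha$, using the linearity of $U^{(0)}$ and $U^{(1)}$ together with the template identities \eqref{U-0-t}--\eqref{U-1-y0t} of Lemma \ref{U-fundamental}. The base case $\alpha = 1$ is supplied by Lemma \ref{r:L123lm}: the identity $L_1 = -4p_0 t + y_0 t$ already has the shape \eqref{L-odd}, with $d_{1,0}^{(1)} = -4$, $d_{1,2}^{(1)} = 1$ and every other entry zero. The decomposition $L_2 = 3p_1 t - 2t^4 + 4f(p_1, t)$ rearranges into the shape \eqref{L-even} once $f$ is written as a $\mathbb{Z}[t]$-linear expression in $1$ and $p_1$ (the only form in which $p_1$ can enter, in view of how the Appendix identities feed into Lemma \ref{r:L123lm}), and the required $2$-adic divisibilities of the resulting coefficients can be read off directly.

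For the inductive step, assuming $L_{2\alpha-1}$ has the form \eqref{L-odd}, the linearity of $U^{(1)}$ gives
\begin{align*}
L_{2\alpha} \;=\; \sum_{k} d_{k,0}^{(2\alpha-1)} 2^{k-1} U^{(1)}(p_0 t^k) \;+\; \sum_{k} d_{k,2}^{(2\alpha-1)} 2^{k-1} U^{(1)}(y_0 t^k),
\end{align*}
and substituting \eqref{U-1-p0t} and \eqref{U-1-y0t}, then interchanging summations, produces an expression of the announced form $p_1 \cdot (\text{series in } t) + (\text{series in } t)$. The mirror computation using \eqref{U-0-t} and \eqref{U-0-p1t} carries $L_{2\alpha}$ of the form \eqref{L-even} to $L_{2\alpha+1}$ of the form \eqref{L-odd}. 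The resulting arrays $d_{n,i}^{(\alpha+1)}$ are explicit bilinear expressions in the arrays $d_{n,i}^{(\alpha)}$ and the fixed arrays $a_i, b_i, c_i, d_i$ furnished by Lemma \ref{U-fundamental}.

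The principal obstacle is the $2$-adic valuation bookkeeping. For example, the $p_1 t^n$ contribution of the summand $d_{k,0}^{(2\alpha-1)} 2^{k-1} U^{(1)}(p_0 t^k)$ carries a power of $2$ with exponent $(k-1) + \lfloor (2n-k+1)/2 \rfloor$, which must be at least $n-1$ in order to match \eqref{L-even}. Splitting on the parity of $k$ reduces this to $m + n - 1 \geq n - 1$ (when $k = 2m$, $m \geq 1$) and $m + n \geq n - 1$ (when $k = 2m+1$, $m \geq 0$), both trivial. The remaining cases lead to analogous inequalities such as
\begin{align*}
(k-1) + \left\lfloor \tfrac{2n-k}{2} \right\rfloor \geq n - 1, \qquad (k-1) + \left\lfloor \tfrac{2n-k-4}{2} \right\rfloor \geq n - 3,
\end{align*}
together with their counterparts for the $U^{(0)}$ step, all of which follow from the same parity bookkeeping combined with the summation bounds $n \geq \lceil (k+4)/2 \rceil$ etc.\ built into Lemma \ref{U-fundamental}. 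Once these floor estimates are verified the induction closes, yielding the integer arrays $d_{n,i}^{(\alpha)}$.
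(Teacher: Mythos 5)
Your proposal is correct and follows essentially the same route as the paper: induction on $\alpha$ starting from $L_1=-4p_0t+y_0t$, applying linearity of $U^{(0)}$, $U^{(1)}$ together with the templates of Lemma \ref{U-fundamental}, and closing the step by the floor-function exponent estimates such as $k-1+\lfloor\frac{2n-k+1}{2}\rfloor\geq n$ and $k-1+\lfloor\frac{2n-k-4}{2}\rfloor\geq n-3$. The only difference is your separate treatment of $L_2$ as an additional base case, which is unnecessary (the step from $L_{2\alpha-1}$ to $L_{2\alpha}$ already covers it) and is the one place where your argument is vaguer than it needs to be.
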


\begin{proof}
By \eqref{r:L1id} we see that $L_1 = -4p_0 t + y_0 t$ has the form given in \eqref{L-odd}. Assume that $L_{2\alpha-1}$ has the form given in \eqref{L-odd} for a fixed $\alpha$. Then
\begin{align}\label{even-odd-1}
L_{2\alpha} = U^{(1)}(L_{2\alpha-1}) = \sum_{k=1}^{\infty} d_{k,0}^{(2\alpha-1)} \, 2^{k-1} U^{(1)}(p_0 t^k) + \sum_{k=1}^{\infty} d_{k,2}^{(2\alpha-1)} \, 2^{k-1} U^{(1)}(y_0 t^k).
\end{align}
We aim to show that each sum on the right-hand side of \eqref{even-odd-1} satisfies the form in \eqref{L-even}. From \eqref{U-1-p0t} and \eqref{U-1-y0t},
\begin{align*}
L_{2\alpha} &= p_1 \sum_{k=1}^{\infty} \sum_{n \geq \lceil \frac{k}{2} \rceil} d_{k,0}^{(2\alpha-1)} c_0(k,n) \, 2^{k-1 + \left\lfloor \frac{2n - k + 1}{2} \right\rfloor} t^n \\
&\quad + \sum_{k=1}^{\infty} \sum_{n \geq \lceil \frac{k+4}{2} \rceil} d_{k,0}^{(2\alpha-1)} c_1(k,n) \, 2^{k-1 + \left\lfloor \frac{2n - k - 3}{2} \right\rfloor} t^n \\
&\quad + p_1 \sum_{k=1}^{\infty} \sum_{n \geq \lceil \frac{k}{2} \rceil} d_{k,2}^{(2\alpha-1)} d_0(k,n) \, 2^{k-1 + \left\lfloor \frac{2n - k}{2} \right\rfloor} t^n \\
&\quad + \sum_{k=1}^{\infty} \sum_{n \geq \lceil \frac{k+4}{2} \rceil} d_{k,2}^{(2\alpha-1)} d_1(k,n) \, 2^{k-1 + \left\lfloor \frac{2n - k - 4}{2} \right\rfloor} t^n.
\end{align*}
For $k \geq 1$ and $n \geq \lceil \frac{k}{2} \rceil$ we have
\begin{align*}
k-1 + \left\lfloor \frac{2n - k + 1}{2} \right\rfloor &= \left\lfloor \frac{2n - k + 1 + 2k - 2}{2} \right\rfloor = n + \left\lfloor \frac{k-1}{2} \right\rfloor \geq n, \\
k-1 + \left\lfloor \frac{2n - k}{2} \right\rfloor &= \left\lfloor \frac{2n - k + 2k - 2}{2} \right\rfloor = n + \left\lfloor \frac{k-2}{2} \right\rfloor \geq n-1.
\end{align*}
For $k \geq 1$ and $n \geq \lceil \frac{k}{2} \rceil + 2$ we have
\begin{align*}
k-1 + \left\lfloor \frac{2n - k - 3}{2} \right\rfloor &= \left\lfloor \frac{2n - k - 3 + 2k - 2}{2} \right\rfloor = n + \left\lfloor \frac{k-5}{2} \right\rfloor \geq n-2, \\
k-1 + \left\lfloor \frac{2n - k - 4}{2} \right\rfloor &= \left\lfloor \frac{2n - k - 4 + 2k - 2}{2} \right\rfloor = n + \left\lfloor \frac{k-6}{2} \right\rfloor \geq n-3.
\end{align*}
Thus the sums on the right-hand side of \eqref{even-odd-1} have the form of \eqref{L-even}. Hence $L_{2\alpha}$ has the desired form. The proof that the correct form of $L_{2\alpha}$ implies the correct form of $L_{2\alpha+1}$ is analogous. The general result follows by induction.
\end{proof}

By Lemma \ref{U-power} and Lemma \ref{L-alpha} we set
\begin{align*}
L_{2\alpha-1} &= p_0 \sum_{n=1}^{\infty} l_{n,0}^{(2\alpha-1)} t^n + y_0 \sum_{n=1}^{\infty} l_{n,2}^{(2\alpha-1)} t^n, \\[4pt]
L_{2\alpha} &= p_1 \sum_{n=1}^{\infty} l_{n,1}^{(2\alpha)} t^n + \sum_{n=3}^{\infty} l_{n,3}^{(2\alpha)} t^n.
\end{align*}
Next we define
\begin{align*}
D^{(\alpha)}(l_{m,i}, l_{n,j}) := l_{m,i}^{(\alpha)} l_{n,j}^{(\alpha+2)} - l_{m,i}^{(\alpha+2)} l_{n,j}^{(\alpha)},
\end{align*}
and denote by $\pi(n)$ the $2$-adic order of $n$ (i.e., the highest power of $2$ that divides $n$). By Lemma \ref{L-alpha}, we note that for $m, n \in \mathbb{N}$ and $\alpha \geq 1$, the coefficients $l_{n,1}$ and $l_{m,3}$ appear only in $L_{2\alpha}$, while $l_{n,0}$ and $l_{m,2}$ appear only in $L_{2\alpha-1}$.
\begin{lemma}\label{pi-L}
For $\alpha \geq 1$ and $i,j \in \{0,1,2,3\}$, the following inequalities hold:
\begin{itemize}
\item[(1)] For the odd cases we have
\begin{align}
&\pi\bigl(D^{(2\alpha-1)}(l_{m,i},l_{n,j})\bigr) \geq 2\alpha - 2 + m + n + \lambda_{i,j}, \label{pi-odd}\\
&\pi\bigl(D^{(2\alpha-1)}(l_{1,2},l_{2,0})\bigr) \geq 2\alpha, \label{pi-odd-1220}\\
&\pi\bigl(D^{(2\alpha-1)}(l_{1,2},2l_{2,2} + l_{3,2})\bigr) \geq 2\alpha; \label{pi-odd-123}
\end{align}

\item[(2)] For the even cases we have
\begin{align}
&\pi\bigl(D^{(2\alpha)}(l_{m,i},l_{n,j})\bigr) \geq 2\alpha - 1 + m + n + \lambda_{i,j} - \delta(m,i,n,j), \label{pi-even}\\
&\pi\bigl(D^{(2\alpha)}(l_{1,1},l_{3,3})\bigr) \geq 2\alpha, \label{pi-even-1133}\\
&\pi\bigl(D^{(2\alpha)}(l_{2,1},l_{3,3})\bigr) \geq 2\alpha + 1; \label{pi-even-2133}
\end{align}
\end{itemize}
where
\begin{align*}
\lambda_{i,j} = \left\{\begin{array}{ll}
            -2, & \text{if } (i,j) = (1,1), (0,2), (2,0), \text{ or } (0,0), \\
            -3, & \text{if } (i,j) = (2,2), \\
            -4, & \text{if } (i,j) = (1,3) \text{ or } (3,1), \\
            -6, & \text{if } (i,j) = (3,3),
        \end{array}\right.
\end{align*}
\begin{align*}
\delta(m,i,n,j) = \left\{\begin{array}{ll}
            1, & \text{if } (m,i,n,j) \in G \text{ or } (n,j,m,i) \in G, \\
            0, & \text{otherwise},
        \end{array}\right.
\end{align*}
and
\[
G := \bigl\{ (m,i,n,j) : (m,i) \in \{(5,1), (6,3), (7,3)\} \text{ and } (n,j) \in \{(4,3), (5,3)\} \bigr\}.
\]
\end{lemma}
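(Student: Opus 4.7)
The plan is to prove Lemma 4.6 by a joint induction on $\alpha$, where at each step the odd-indexed bounds \eqref{pi-odd}--\eqref{pi-odd-123} are established using the even-indexed bounds at the previous level, and vice versa, since each increment of $\alpha$ corresponds to one application of $U^{(0)}$ followed by one application of $U^{(1)}$ per Definition \ref{r:defL}. For the base case $\alpha=1$, I would use $L_1=-4p_0t+y_0t$ from \eqref{r:L1id} together with the explicit expressions for $L_2$ and $L_3$ from Lemma \ref{r:L123lm}, and then compute $L_4$ and $L_5$ directly using the identities in Lemma \ref{U-fundamental} applied to the coefficients of $L_3$ and $L_4$. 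Since $l_{m,i}^{(\alpha)}=0$ for $m$ larger than a specific bound depending on the degree of $L_\alpha$, only finitely many pairs $(m,i,n,j)$ require verification at the base, and each can be checked by direct 2-adic computation.

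For the inductive step, assume the lemma holds through level $\alpha$. To establish the bounds at level $\alpha+1$, I would express $l_{m,i}^{(2\alpha+1)}$ as a $\mathbb{Z}$-linear combination of the coefficients $l_{n,1}^{(2\alpha)}$ and $l_{n,3}^{(2\alpha)}$, using the formulas in \eqref{U-0-t} and \eqref{U-0-p1t}; explicitly, with the discrete arrays $a_i, b_i$ of Lemma \ref{U-fundamental},
\begin{align*}
l_{m,0}^{(2\alpha+1)}&=\sum_n l_{n,1}^{(2\alpha)} b_0(n,m)2^{\lfloor(2m-n+1)/2\rfloor}+\sum_n l_{n,3}^{(2\alpha)} a_0(n,m)2^{\lfloor(2m-n+2)/2\rfloor},\\
l_{m,2}^{(2\alpha+1)}&=\sum_n l_{n,1}^{(2\alpha)} b_1(n,m)2^{\lfloor(2m-n-1)/2\rfloor}+\sum_n l_{n,3}^{(2\alpha)} a_1(n,m)2^{\lfloor(2m-n+2)/2\rfloor},
\end{align*}
and analogous formulas for $l_{m,i}^{(2\alpha+2)}$ via \eqref{U-1-p0t}--\eqref{U-1-y0t}. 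Substituting these into $D^{(2\alpha+1)}(l_{m,i},l_{n,j})$ and collecting terms, the resulting bilinear expression in the level-$2\alpha$ coefficients rearranges, after subtracting cross terms, into a combination of differences $D^{(2\alpha)}(l_{p,q},l_{r,s})$ weighted by the 2-adic prefactors coming from the kernels $a_i,b_i,c_i,d_i$, plus single-coefficient terms of the form $l_{p,q}^{(2\alpha)} l_{r,s}^{(2\alpha)}$ whose 2-adic order is controlled by Lemma \ref{L-alpha}.

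The 2-adic bookkeeping then reduces to combining three pieces: the prefactor from Lemma \ref{U-fundamental} (of order $\lfloor (2m-p+\text{const})/2\rfloor$), the inductive estimate $\pi(D^{(2\alpha)}(l_{p,q},l_{r,s}))\geq 2\alpha-1+p+r+\lambda_{q,s}-\delta(p,q,r,s)$, and the bound $\pi(l_{p,q}^{(\alpha)})\geq p-1$ (or $p-3$ in the $(q=3)$ case) from Lemma \ref{L-alpha}. A careful $\lfloor\cdot\rfloor$/$\lceil\cdot\rceil$ arithmetic check, performed separately in each $(i,j)\in\{0,1,2,3\}^2$ case, confirms that the dominant exponent is at least $2\alpha+m+n+\lambda_{i,j}$. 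The even-case step at level $\alpha+1$ is symmetric, using the recursion $L_{2\alpha+2}=U^{(1)}(L_{2\alpha+1})$ together with the odd-case estimate just established at level $\alpha+1$.

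The main obstacle will be the sharper side inequalities \eqref{pi-odd-1220}, \eqref{pi-odd-123}, \eqref{pi-even-1133}, \eqref{pi-even-2133}, and the exceptional correction $-\delta(m,i,n,j)$ coming from the set $G$. These arise because the generic bound derived from the kernel weights loses one factor of $2$ when certain low-index coefficients (for instance, the leading $l_{1,*}^{(\alpha)}$ terms) dominate and fail to cancel in the bilinear recursion. I would isolate these critical terms by tracking the exact leading behavior of $l_{m,i}^{(2\alpha+1)}$ and $l_{m,i}^{(2\alpha+2)}$ for $m$ small, and show by a direct case analysis using \eqref{r:u1}--\eqref{r:u5} and \eqref{r:y0t3} that the missing power of $2$ is either recovered from an extra factor of $2$ in the relevant kernel entry $b_1(n,m)$ or $d_1(n,m)$, or explicitly absorbed into the correction term $\delta$ on the set $G$. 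Verifying that $G$ exhausts all exceptional configurations is the most delicate part of the argument.
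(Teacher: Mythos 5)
Your overall architecture matches the paper's: a two-stage induction in which the odd-level bounds at step $s$ yield the even-level bounds at step $s$, and those yield the odd-level bounds at step $s+1$, with the base case read off from $L_1$ and $L_3$ and the transfer effected by the kernel expansions of Lemma \ref{U-fundamental}. However, there is a genuine gap in your description of the inductive step. You claim that substituting the kernel expansions into $D^{(2\alpha+1)}(l_{m,i},l_{n,j})$ produces a combination of differences $D^{(2\alpha)}(l_{p,q},l_{r,s})$ \emph{plus} residual single-coefficient terms $l_{p,q}^{(2\alpha)}l_{r,s}^{(2\alpha)}$ to be bounded via Lemma \ref{L-alpha}. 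No such residual terms exist, and it is essential that they do not: because the same kernels $x_{k,u}(m,i)$ (resp.\ $y_{k,u}(m,i)$) relate level $2s-1$ to $2s$ and level $2s+1$ to $2s+2$, the bilinear expression telescopes exactly into $\sum_{k,u,r,v}x_{k,u}(m,i)x_{r,v}(n,j)D^{(2s-1)}(l_{k,u},l_{r,v})$ (the paper's identity \eqref{2s-re}), the diagonal terms vanishing since $D(l_{k,u},l_{k,u})=0$. Had the residual products you posit actually appeared, your plan to control them by Lemma \ref{L-alpha} would fail outright: those bounds ($\pi(l_{k,0}^{(2\alpha-1)})\geq k-1$, etc.) are independent of $\alpha$, so they cannot supply the $2\alpha$-growth demanded by \eqref{pi-odd} and \eqref{pi-even}. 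The only source of that growth is the inductive estimate on the $D^{(2s-1)}$ factors, so establishing the exact telescoping is not a bookkeeping detail but the load-bearing step.

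A second, related shortfall: you treat \eqref{pi-odd-1220}, \eqref{pi-odd-123}, \eqref{pi-even-1133}, \eqref{pi-even-2133} as sharper side results to be patched in afterwards, whereas in the paper they are indispensable \emph{inputs} to the induction for the main inequality. The generic estimate \eqref{r:even} fails for the kernel tuples $(k,u,r,v)\in\{(1,2,2,2),(1,2,3,2),(3,2,1,2),(2,2,1,2)\}$ (and dually for $(1,1,3,3)$, $(2,1,3,3)$), and the rescue is not an extra factor of $2$ hiding in a kernel entry but the fact that the offending terms group into $D^{(2s-1)}(l_{1,2},\,2l_{2,2}+l_{3,2})$ --- a difference of a specific \emph{linear combination} of coefficients, estimated by \eqref{pi-odd-123} --- together with the explicit linear relations \eqref{r:11}--\eqref{r:33} and \eqref{r:l12}--\eqref{r:l20} from Lemma \ref{U2-j-jl2} that push these combination estimates from one level to the next. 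Your proposal does not identify this device, and without it the case analysis over the exceptional set $G$ you describe will not close.
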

\begin{proof}
We prove this lemma by mathematical induction. First, we consider the case $\alpha = 1$. Since $l_{n,j}^{(1)} = 0$ except $l_{1,0}^{(1)} = -4$ and $l_{1,2}^{(1)} = 1$, from equation \eqref{L-odd} we have
\[
\pi\bigl(D^{(1)}(l_{m,i}, l_{n,j})\bigr) = \infty
\]
when $m, n > 1$,
\[
\pi\bigl(D^{(1)}(l_{1,i}, l_{n,j})\bigr) = \pi\bigl(-D^{(1)}(l_{n,j}, l_{1,i})\bigr) = \pi\bigl(l_{1,i}^{(1)} l_{n,j}^{(3)}\bigr) \geq n + 1 + \lambda_{i,j}
\]
when $n > 1$, and
\[
\pi\bigl(D^{(1)}(l_{1,0}, l_{1,2})\bigr) \geq 0 = 2 + \lambda_{0,2},
\]
which proves \eqref{pi-odd} for $\alpha = 1$. In particular, we obtain that
\begin{align*}
&\pi\bigl(D^{(1)}(l_{1,2}, l_{2,0})\bigr) = \pi\bigl(l_{2,0}^{(3)}\bigr) \geq 2,\\
&\pi\bigl(D^{(1)}(l_{1,2}, 2l_{2,2} + l_{3,2})\bigr) = \pi\bigl(2l_{2,2}^{(3)} + l_{3,2}^{(3)}\bigr) \geq 2,
\end{align*}
by Lemma \ref{r:L123lm}, which proves that \eqref{pi-odd-1220} and \eqref{pi-odd-123} hold for $\alpha = 1$.

Second, we assume that \eqref{pi-odd}--\eqref{pi-odd-123} hold for $\alpha = s$ and prove that \eqref{pi-even}--\eqref{pi-even-2133} hold for $\alpha = s$. We now compare the coefficients on both sides of
\begin{align*}
L_{2s} = p_1 \sum_{n=1}^{\infty} l_{n,1}^{(2s)} t^n + \sum_{n=3}^{\infty} l_{n,3}^{(2s)} t^n = \sum_{k=1}^{\infty} l_{k,0}^{(2s-1)} U^{(1)}(p_0 t^k) + \sum_{k=1}^{\infty} l_{k,2}^{(2s-1)} U^{(1)}(y_0 t^k).
\end{align*}
Denote $p_2 = y_0$ and $p_3 = 1$. For $u = 0$ or $2$, define $x_{k,u}(m,i) := [p_i t^m] U^{(1)}(p_u t^k)$ to be the coefficient of $p_i t^m$ in $U^{(1)}(p_u t^k)$, namely
\begin{align*}
U^{(1)}(p_u t^k) = p_1 \sum_n x_{k,u}(n,1) t^n + \sum_n x_{k,u}(n,3) t^n.
\end{align*}
Then we derive that $l_{n,i}^{(2s)} = \sum_{k,u} x_{k,u}(n,i) l_{k,u}^{(2s-1)}$ and
\begin{align}\label{2s-re}
D^{(2s)}(l_{m,i}, l_{n,j}) = \sum_{k,u,r,v} x_{k,u}(m,i) x_{r,v}(n,j) D^{(2s-1)}(l_{k,u}, l_{r,v}).
\end{align}
By \eqref{U-1-p0t} and \eqref{U-1-y0t}, we obtain $\pi\bigl(x_{k,u}(m,i)\bigr) \geq \left\lfloor \frac{2m - k + \gamma_{u,i}}{2} \right\rfloor$, where $\gamma_{0,1} = 1$, $\gamma_{2,1} = 0$, $\gamma_{0,3} = -3$, and $\gamma_{2,3} = -4$. Now we first consider $(k,u,r,v) \in \{(1,2,2,2), (1,2,3,2), (3,2,1,2), (2,2,1,2)\}$. After checking all coefficients of $U^{(1)}(y_0 t)$ and $U^{(1)}(y_0 t^2)$ given in the Appendix and $U^{(1)}(y_0 t^3)$ in \eqref{r:y0t3}, we obtain that
\begin{align}
\label{r:even1}
&\pi\!\left( \sum_{r=2,3} \bigl( x_{1,2}(m,i) x_{r,2}(n,j) + x_{r,2}(m,i) x_{1,2}(n,j) \bigr) D^{(2s-1)}(l_{1,2}, l_{r,2}) \right) \nonumber \\
&\qquad \geq 2s - 1 + m + n + \lambda_{i,j} - \delta(m,i,n,j),
\end{align}
which is based on inequalities \eqref{pi-odd} and \eqref{pi-odd-123} for $\alpha = s$.

Apart from $(k,u,r,v) \in \{(1,2,2,2), (1,2,3,2), (3,2,1,2), (2,2,1,2)\}$, we have
\begin{align}
\label{r:even}
&\pi\!\left( \sum_{k,u,r,v} x_{k,u}(m,i) x_{r,v}(n,j) D^{(2s-1)}(l_{k,u}, l_{r,v}) \right) \nonumber \\
&\qquad \geq \min_{(k,u) \neq (r,v)} \left( k + r + 2s - 2 + \lambda_{u,v} + \left\lfloor \frac{2m - k + \gamma_{u,i}}{2} \right\rfloor + \left\lfloor \frac{2n - r + \gamma_{v,j}}{2} \right\rfloor \right) \nonumber \\
&\qquad = \min_{(k,u) \neq (r,v)} \left( m + n + 2s - 2 + \lambda_{u,v} + \left\lfloor \frac{k + \gamma_{u,i}}{2} \right\rfloor + \left\lfloor \frac{r + \gamma_{v,j}}{2} \right\rfloor \right) \nonumber \\
&\qquad \geq 2s - 1 + m + n + \lambda_{i,j}.
\end{align}
By \eqref{2s-re}--\eqref{r:even} we obtain that inequality \eqref{pi-even} holds for $\alpha = s$. For $\pi\bigl(D^{(2s)}(l_{1,1}, l_{3,3})\bigr)$ and $\pi\bigl(D^{(2s)}(l_{2,1}, l_{3,3})\bigr)$, based on Lemma \ref{U2-j-jl2} we derive:
\begin{align}
\label{r:11}
l_{1,1}^{(2s)} &= 4 l_{1,0}^{(2s-1)} + l_{2,0}^{(2s-1)} + 35 l_{1,2}^{(2s-1)} + 7 l_{2,2}^{(2s-1)}, \\[4pt]
\label{r:21}
l_{2,1}^{(2s)} &= 40 l_{1,0}^{(2s-1)} + 20 l_{2,0}^{(2s-1)} + 6 l_{3,0}^{(2s-1)} + l_{4,0}^{(2s-1)} \nonumber \\
&\quad + 420 l_{1,2}^{(2s-1)} + 182 l_{2,2}^{(2s-1)} + 49 l_{3,2}^{(2s-1)} + 7 l_{4,2}^{(2s-1)}, \\[4pt]
\label{r:33}
l_{3,3}^{(2s)} &= -4 l_{1,0}^{(2s-1)} - l_{2,0}^{(2s-1)} - 36 l_{1,2}^{(2s-1)} - 8 l_{2,2}^{(2s-1)}.
\end{align}
Rewriting $\pi\bigl(D^{(2s)}(l_{1,1}, l_{3,3})\bigr)$ as $\pi\bigl(D^{(2s-1)}(4l_{1,0} + l_{2,0} + 35l_{1,2} + 7l_{2,2},\; -4l_{1,0} - l_{2,0} - 36l_{1,2} - 8l_{2,2})\bigr)$, we obtain $\pi\bigl(D^{(2s)}(l_{1,1}, l_{3,3})\bigr) \geq 2s$ by \eqref{pi-odd} and \eqref{pi-odd-1220}. Similarly, we obtain $\pi\bigl(D^{(2s)}(l_{2,1}, l_{3,3})\bigr) \geq 2s + 1$ by \eqref{pi-odd}.

Finally, we assume that \eqref{pi-odd}--\eqref{pi-even-2133} hold for $\alpha = s$ and prove that \eqref{pi-odd}--\eqref{pi-odd-123} hold for $\alpha = s+1$. For $u = 1$ or $3$, define $y_{k,u}(m,i) := [p_i t^m] U^{(0)}(p_u t^k)$ to be the coefficient of $p_i t^m$ in $U^{(0)}(p_u t^k)$, namely
\begin{align*}
U^{(0)}(p_u t^k) = p_0 \sum_n y_{k,u}(n,0) t^n + y_0 \sum_n y_{k,u}(n,2) t^n.
\end{align*}
Then
\begin{align}\label{r:2s-re}
D^{(2s+1)}(l_{m,i}, l_{n,j}) = \sum_{k,u,r,v} y_{k,u}(m,i) y_{r,v}(n,j) D^{(2s)}(l_{k,u}, l_{r,v}).
\end{align}
With the help of \eqref{U-0-t} and \eqref{U-0-p1t}, we have $\pi\bigl(y_{k,u}(m,i)\bigr) \geq \left\lfloor \frac{2m - k + \gamma_{u,i}}{2} \right\rfloor$, where $\gamma_{1,0} = 1$, $\gamma_{1,2} = -1$, $\gamma_{3,0} = 2$, and $\gamma_{3,2} = 2$. Apart from $(k,u,r,v) \in \{(1,1,3,3), (2,1,3,3), (3,3,1,1), (3,3,2,1)\}$, we obtain that
\begin{align}
\label{r:odd}
&\pi\!\left( \sum_{k,u,r,v} y_{k,u}(m,i) y_{r,v}(n,j) D^{(2s)}(l_{k,u}, l_{r,v}) \right) \nonumber \\
&\qquad \geq \min_{(k,u) \neq (r,v)} \left( m + n + 2s - 1 + \lambda_{u,v} - \delta(k,u,r,v) + \left\lfloor \frac{k + \gamma_{u,i}}{2} \right\rfloor + \left\lfloor \frac{r + \gamma_{v,j}}{2} \right\rfloor \right) \nonumber \\
&\qquad \geq 2s + m + n + \lambda_{i,j},
\end{align}
where the last inequality is determined by \eqref{pi-even} for $\alpha = s$. By \eqref{pi-even-1133} and \eqref{pi-even-2133} we have
\begin{align}
\label{r:y1}
&\pi\!\left( y_{1,1}(m,i) y_{3,3}(n,j) D^{(2s)}(l_{1,1}, l_{3,3}) \right) \geq 2s + m + n - 2, \\[4pt]
\label{r:y2}
&\pi\!\left( y_{2,1}(m,i) y_{3,3}(n,j) D^{(2s)}(l_{2,1}, l_{3,3}) \right) \geq 2s + m + n - 2.
\end{align}
Hence \eqref{pi-odd} holds for $\alpha = s+1$ by \eqref{r:2s-re}--\eqref{r:y2}.

Next, we consider \eqref{pi-odd-1220} and \eqref{pi-odd-123}. By Lemma \ref{U2-j-jl2}, we obtain
\begin{align}
\label{r:l12}
l_{1,2}^{(2s+1)} &= -l_{3,3}^{(2s)} - l_{1,1}^{(2s)}, \\[4pt]
\label{r:l22}
l_{2,2}^{(2s+1)} &= -8 l_{3,3}^{(2s)} - 4 l_{4,3}^{(2s)} - l_{5,3}^{(2s)} - 10 l_{1,1}^{(2s)} - 5 l_{2,1}^{(2s)} - l_{3,1}^{(2s)}, \\[4pt]
\label{r:l32}
l_{3,2}^{(2s+1)} &= -28 l_{3,3}^{(2s)} - 32 l_{4,3}^{(2s)} - 18 l_{5,3}^{(2s)} - 6 l_{6,3}^{(2s)} - l_{7,3}^{(2s)} \nonumber \\
&\quad - 32 l_{1,1}^{(2s)} - 38 l_{2,1}^{(2s)} - 22 l_{3,1}^{(2s)} - 7 l_{4,1}^{(2s)} - l_{5,1}^{(2s)}, \\[4pt]
\label{r:l20}
l_{2,0}^{(2s+1)} &= 64 l_{3,3}^{(2s)} + 28 l_{4,3}^{(2s)} + 7 l_{5,3}^{(2s)} + 84 l_{1,1}^{(2s)} + 38 l_{2,1}^{(2s)} + 8 l_{3,1}^{(2s)}.
\end{align}
Rewriting $\pi\bigl(D^{(2s+1)}(l_{1,2}, l_{2,0})\bigr)$ using \eqref{r:l12}--\eqref{r:l20} and extracting terms that are less than $2s+2$ by \eqref{pi-even} and \eqref{pi-even-1133}, we obtain
\[
D^{(2s+1)}(l_{1,2}, l_{2,0}) \equiv D^{(2s)}(l_{3,3} + l_{1,1},\; l_{5,3} + 2l_{2,1}) \pmod{2^{2s+2}}.
\]
From \eqref{r:11}--\eqref{r:33} and
\begin{align*}
l_{5,3}^{(2s)} &= -208 l_{1,0}^{(2s-1)} - 180 l_{2,0}^{(2s-1)} - 96 l_{3,0}^{(2s-1)} - 34 l_{4,0}^{(2s-1)} - 8 l_{5,0}^{(2s-1)} - l_{6,0}^{(2s-1)} \\
&\quad - 2748 l_{1,2}^{(2s-1)} - 1944 l_{2,2}^{(2s-1)} - 926 l_{3,2}^{(2s-1)} - 306 l_{4,2}^{(2s-1)} - 68 l_{5,2}^{(2s-1)} - 8 l_{6,2}^{(2s-1)},
\end{align*}
which is again derived by Lemma \ref{U2-j-jl2}, we obtain that $\pi\bigl(D^{(2s)}(l_{3,3} + l_{1,1},\; l_{5,3} + 2l_{2,1})\bigr) \geq 2s+2$ by $\pi\bigl(D^{(2s-1)}(l_{1,2}, 2l_{2,2} + l_{3,2})\bigr) \geq 2s$ and $\pi\bigl(D^{(2s-1)}(l_{1,2}, l_{2,0})\bigr) \geq 2s$. Thus, $\pi\bigl(D^{(2s+1)}(l_{1,2}, l_{2,0})\bigr) \geq 2s+2$, which means that \eqref{pi-odd-1220} holds for $\alpha = s+1$. Similarly, rewriting $\pi\bigl(D^{(2s+1)}(l_{1,2}, 2l_{2,2} + l_{3,2})\bigr)$ using \eqref{r:l12}--\eqref{r:l32}, we find that each term is at least $2s+2$ by \eqref{pi-even}, so that $\pi\bigl(D^{(2s+1)}(l_{1,2}, 2l_{2,2} + l_{3,2})\bigr) \geq 2s+2$. Thus, we prove that \eqref{pi-odd-1220} and \eqref{pi-odd-123} hold for $\alpha = s+1$.
\end{proof}

\begin{theorem}\label{main-thm}
For each $\alpha \geq 1$ there exists an integral constant $x_{\alpha}$ such that
\begin{align*}
L_{2\alpha+2} &\equiv x_{2\alpha} L_{2\alpha} \pmod{2^{2\alpha}},\\
L_{2\alpha+1} &\equiv x_{2\alpha-1} L_{2\alpha-1} \pmod{2^{2\alpha-2}}.
\end{align*}
\end{theorem}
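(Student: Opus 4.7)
The plan is to follow the classical ``pivot'' strategy from Atkin's proof of \eqref{11-congruence} in \cite{Atkin-1967}: for each parity of $\alpha$ I pick one coefficient in the expansions of Lemma \ref{L-alpha} that is odd, define $x_{\alpha}$ as the $2$-adic ratio of that coefficient between successive levels, and reduce the stated congruence to $2$-adic divisibility of the Wronskian-type quantities $D^{(\cdot)}(l_{n,i},l_{m,j})$ already bounded in Lemma \ref{pi-L}.

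Concretely, I would take $l_{1,1}^{(2\alpha)}$ as the pivot in the even case and $l_{1,2}^{(2\alpha-1)}$ as the pivot in the odd case. To see the first is odd, I extract $[q^{1}]L_{2\alpha}$ in two ways: from Lemma \ref{lem-L-alpha} this coefficient equals $\Delta_3(\lambda_{2\alpha})$, whereas from \eqref{L-even} (using $p_1=1+O(q)$, $t=q+O(q^{2})$, and that $\sum_{n\ge 3}l_{n,3}^{(2\alpha)}t^{n}=O(q^{3})$) it also equals $l_{1,1}^{(2\alpha)}$. By Xia's proof of \eqref{conj-2} in \cite{Xia-2014} this common value is odd. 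For the odd case, combining the recursion $l_{1,2}^{(2s+1)}=-l_{3,3}^{(2s)}-l_{1,1}^{(2s)}$ from \eqref{r:l12}, the expression \eqref{r:33} for $l_{3,3}^{(2s)}$, and the $2^{k-1}$-divisibility of $l_{k,0}^{(2s-1)}$ and $l_{k,2}^{(2s-1)}$ coming from \eqref{L-odd}, I deduce $l_{3,3}^{(2s)}\equiv 0\pmod{2}$, whence $l_{1,2}^{(2s+1)}\equiv l_{1,1}^{(2s)}\equiv 1\pmod{2}$; the base case $l_{1,2}^{(1)}=1$ comes from \eqref{r:L1id}.

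With the odd pivots in hand, I define integers $x_{2\alpha},x_{2\alpha-1}$ by
\[
x_{2\alpha}\,l_{1,1}^{(2\alpha)}\equiv l_{1,1}^{(2\alpha+2)}\pmod{2^{2\alpha}},\qquad x_{2\alpha-1}\,l_{1,2}^{(2\alpha-1)}\equiv l_{1,2}^{(2\alpha+1)}\pmod{2^{2\alpha-2}},
\]
which exist uniquely modulo the indicated powers of $2$. A direct manipulation using the definition of $D^{(\alpha)}$ then yields
\[
\bigl(l_{n,i}^{(2\alpha+2)}-x_{2\alpha}l_{n,i}^{(2\alpha)}\bigr)\,l_{1,1}^{(2\alpha)}\equiv -D^{(2\alpha)}(l_{n,i},l_{1,1})\pmod{2^{2\alpha}},
\]
and the analogous relation with pivot $l_{1,2}^{(2\alpha-1)}$ for the odd case. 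Since each pivot is a unit modulo the relevant power of $2$, the theorem reduces to the estimates $\pi(D^{(2\alpha)}(l_{n,i},l_{1,1}))\ge 2\alpha$ and $\pi(D^{(2\alpha-1)}(l_{n,i},l_{1,2}))\ge 2\alpha-2$ for every admissible $(n,i)$. Plugging $(m,j)=(1,1)$ (resp.\ $(1,2)$) into Lemma \ref{pi-L} with $\lambda_{1,1}=-2,\lambda_{3,1}=-4$ (resp.\ $\lambda_{0,2}=-2,\lambda_{2,2}=-3$) discharges every case except the boundary $(n,i)=(3,3)$ in the even setting, which is precisely what the specialized bound \eqref{pi-even-1133} provides; the trivial vanishings $D^{(\cdot)}(l_{\mathrm{piv}},l_{\mathrm{piv}})=0$ cover the remaining edge cases.

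The main obstacle is the sharpness of the $2$-adic bounds in Lemma \ref{pi-L}. A pivot other than $(1,1)$ or $(1,2)$ could intersect the exceptional set $G$ and incur the $\delta$-perturbation, which costs a factor of $2$ and would break the induction; both chosen indices are designed to avoid $G$ and its reverse entirely, so that $\delta\equiv 0$ throughout. The single corner case $(n,i)=(3,3)$ in the even setting is exactly the one pre-packaged as \eqref{pi-even-1133}, so the theorem follows cleanly once Lemma \ref{pi-L} is invoked. In short, essentially all the technical content of the theorem is already encoded in Lemma \ref{pi-L}; what remains is routine $2$-adic bookkeeping.
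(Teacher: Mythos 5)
Your proposal is correct and follows the same overall strategy as the paper: pivot on the unit coefficients $l_{1,1}^{(2\alpha)}$ and $l_{1,2}^{(2\alpha-1)}$, solve for $x_\alpha$ as a $2$-adic ratio of pivots, and reduce the congruence between the remaining coefficients to the bounds on $D^{(\alpha)}$ from Lemma \ref{pi-L}. The one genuine difference is how you certify that the pivots are odd. The paper does this internally, by iterating the mod-$2$ relations $U^{(1)}(y_0t)\equiv p_1t$ and $U^{(0)}(p_1t)\equiv y_0t \pmod 2$ to get $L_{2\alpha}\equiv l_{1,1}^{(2\alpha)}p_1t$ and $L_{2\alpha-1}\equiv l_{1,2}^{(2\alpha-1)}y_0t \pmod 2$; you instead identify $l_{1,1}^{(2\alpha)}=[q^1]L_{2\alpha}=\Delta_3(\lambda_{2\alpha})$ from Lemma \ref{lem-L-alpha} and \eqref{L-even} and import Xia's proof of \eqref{conj-2}, then propagate oddness to the odd-indexed pivots via \eqref{r:l12}, \eqref{r:33} and the $2$-divisibility in \eqref{L-odd}. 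Both are valid; the paper's version keeps the argument self-contained, while yours buys a cleaner combinatorial interpretation of the pivot (and in fact your recursion $l_{1,2}^{(2s+1)}\equiv l_{1,1}^{(2s)}\pmod 2$ could be closed internally by the companion relation $l_{1,1}^{(2s)}\equiv l_{1,2}^{(2s-1)}\pmod 2$ from \eqref{r:11}, making Xia's result unnecessary). Your bookkeeping is also more explicit than the paper's: you correctly single out the $(1,1)$-versus-$(3,3)$ pairing, where the generic bound \eqref{pi-even} only gives $2\alpha-1$ and the special estimate \eqref{pi-even-1133} is genuinely needed, a point the paper's proof passes over silently when it asserts $\pi(D^{(2\alpha)}(l_{m,i},l_{n,j}))\geq 2\alpha$ wholesale.
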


\begin{proof}
By Lemma \ref{pi-L}, we have
\begin{align*}
\pi\bigl(D^{(2\alpha-1)}(l_{m,i}, l_{n,j})\bigr) &\geq 2\alpha - 2,\\
\pi\bigl(D^{(2\alpha)}(l_{m,i}, l_{n,j})\bigr) &\geq 2\alpha.
\end{align*}

According to $U^{(1)}(y_0 t) \equiv p_1 t \pmod{2}$ and $U^{(0)}(p_1 t) \equiv y_0 t \pmod{2}$, we have
\begin{align}\label{eq-u}
U^{(1)}(U^{(0)}(p_1 t)) \equiv p_1 t \pmod{2}, \qquad
U^{(0)}(U^{(1)}(y_0 t)) \equiv y_0 t \pmod{2}.
\end{align}
Using \eqref{r:L1id} and \eqref{eq-L2}, we obtain $L_1 \equiv l_{1,2}^{(1)} y_0 t \equiv y_0 t \pmod{2}$ and $L_2 \equiv l_{1,1}^{(2)} p_1 t \equiv p_1 t \pmod{2}$, where $l_{1,2}^{(1)} = 1$ and $2 \nmid l_{1,1}^{(2)}$.

From \eqref{eq-u}, for $\alpha \geq 1$, we have
\begin{align*}
L_{2\alpha-1} \equiv l_{1,2}^{(2\alpha-1)} y_0 t \equiv y_0 t \pmod{2}, \qquad
L_{2\alpha} \equiv l_{1,1}^{(2\alpha)} p_1 t \equiv p_1 t \pmod{2}.
\end{align*}
This implies that $2 \nmid l_{1,1}^{(2\alpha)}$ and $2 \nmid l_{1,2}^{(2\alpha-1)}$. Let $x_{2\alpha-1}$ be a solution of $l_{1,2}^{(2\alpha-1)} \equiv x_{2\alpha-1} l_{1,2}^{(2\alpha+1)} \pmod{2^{2\alpha-2}}$. Then for $(n,i) \neq (1,2)$,
\begin{align*}
l_{n,i}^{(2\alpha-1)} l_{1,2}^{(2\alpha-1)} \equiv l_{n,i}^{(2\alpha-1)} x_{2\alpha-1} l_{1,2}^{(2\alpha+1)} \equiv x_{2\alpha-1} l_{1,2}^{(2\alpha-1)} l_{n,i}^{(2\alpha+1)} \pmod{2^{2\alpha-2}}.
\end{align*}
Cancelling $l_{1,2}^{(2\alpha-1)}$, we obtain $l_{n,i}^{(2\alpha-1)} \equiv x_{2\alpha-1} l_{n,i}^{(2\alpha+1)} \pmod{2^{2\alpha-2}}$. Similarly, let $x_{2\alpha}$ be a solution of $l_{1,1}^{(2\alpha)} \equiv x_{2\alpha} l_{1,1}^{(2\alpha+2)} \pmod{2^{2\alpha}}$. Then for $(n,i) \neq (1,1)$,
\begin{align*}
l_{n,i}^{(2\alpha)} l_{1,1}^{(2\alpha)} \equiv l_{n,i}^{(2\alpha)} x_{2\alpha} l_{1,1}^{(2\alpha+2)} \equiv x_{2\alpha} l_{1,1}^{(2\alpha)} l_{n,i}^{(2\alpha+2)} \pmod{2^{2\alpha}}.
\end{align*}
Cancelling $l_{1,1}^{(2\alpha)}$, we obtain $l_{n,i}^{(2\alpha)} \equiv x_{2\alpha} l_{n,i}^{(2\alpha+2)} \pmod{2^{2\alpha}}$. Thus we complete the proof of Theorem \ref{main-thm}.
\end{proof}

Based on Lemma \ref{lem-L-alpha} and Theorem \ref{main-thm}, we can directly obtain the theorem below.

\begin{theorem}\label{thm}
For $\alpha \geq 1$ and $n \geq 0$, we have
\begin{align*}
\Delta_3(\lambda_{\alpha}) \, \Delta_3(2^{\alpha+2}n + \lambda_{\alpha+2})
\equiv \Delta_3(\lambda_{\alpha+2}) \, \Delta_3(2^{\alpha}n + \lambda_{\alpha}) \pmod{2^{2\lfloor \alpha/2 \rfloor}},
\end{align*}
where
\begin{align*}
\lambda_{\alpha} = \left\{\begin{array}{ll}
            \dfrac{2^{\alpha+1} + 1}{3}, & \text{if } \alpha \text{ is even}, \\[6pt]
            \dfrac{2^{\alpha} + 1}{3}, & \text{if } \alpha \text{ is odd}.
        \end{array}\right.
\end{align*}
\end{theorem}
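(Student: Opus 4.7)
The plan is to derive Theorem \ref{thm} as a direct formal consequence of Theorem \ref{main-thm} and Lemma \ref{lem-L-alpha}. The crucial observation is that for $\alpha$ and $\alpha+2$ of the same parity, Lemma \ref{lem-L-alpha} writes both $L_\alpha$ and $L_{\alpha+2}$ as one and the same eta-quotient prefactor $\varphi_\alpha(q)$ multiplied by $\Psi_\alpha(q):=\sum_{n\ge 0}\Delta_3(2^\alpha n+\lambda_\alpha)q^n$ and $\Psi_{\alpha+2}(q)$ respectively. After unifying the two parts of Theorem \ref{main-thm} into a single statement $L_{\alpha+2}\equiv x_\alpha L_\alpha\pmod{2^{2\lfloor\alpha/2\rfloor}}$---which is legitimate because $2\lfloor(2k-1)/2\rfloor=2k-2$ and $2\lfloor 2k/2\rfloor=2k$ reproduce the moduli in the odd and even cases respectively---this congruence factors as
$$\varphi_\alpha(q)\bigl(\Psi_{\alpha+2}(q)-x_\alpha\Psi_\alpha(q)\bigr)\equiv 0\pmod{2^{2\lfloor\alpha/2\rfloor}}.$$

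Next I would observe that the eta-quotient prefactor satisfies $\varphi_\alpha(q)=q(1+\text{higher order terms})$ with integer coefficients, hence is a unit in $\mathbb{Z}[[q]][q^{-1}]$. Cancelling it yields $\Psi_{\alpha+2}(q)\equiv x_\alpha\Psi_\alpha(q)\pmod{2^{2\lfloor\alpha/2\rfloor}}$, and reading off the coefficient of $q^n$ gives
$$\Delta_3(2^{\alpha+2}n+\lambda_{\alpha+2})\equiv x_\alpha\,\Delta_3(2^\alpha n+\lambda_\alpha)\pmod{2^{2\lfloor\alpha/2\rfloor}}.$$
To eliminate the unknown constant $x_\alpha$, I would specialize at $n=0$ to obtain $\Delta_3(\lambda_{\alpha+2})\equiv x_\alpha\,\Delta_3(\lambda_\alpha)$; multiplying the general $n$-th congruence by $\Delta_3(\lambda_\alpha)$ and the $n=0$ congruence by $\Delta_3(2^\alpha n+\lambda_\alpha)$, and subtracting, delivers exactly the symmetric congruence asserted in Theorem \ref{thm}.

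This final step is almost purely bookkeeping; the only verification beyond formal manipulation is that $\varphi_\alpha(q)/q$ has constant term $1$, which is immediate from inspection of the eta-quotients in Lemma \ref{lem-L-alpha}. All the real difficulty of the argument sits upstream in the delicate $2$-adic valuation estimates of Lemma \ref{pi-L} and the inductive construction of the $U$-sequence behind Theorem \ref{main-thm}; with those in hand, I do not expect any further obstacle in closing out Theorem \ref{thm}.
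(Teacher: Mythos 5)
Your proposal is correct and follows exactly the route the paper intends: the paper simply asserts that Theorem \ref{thm} follows "directly" from Lemma \ref{lem-L-alpha} and Theorem \ref{main-thm}, and your argument—unifying the two moduli as $2^{2\lfloor\alpha/2\rfloor}$, cancelling the common unit eta-quotient prefactor, extracting coefficients, and eliminating $x_\alpha$ via the $n=0$ case—is the standard bookkeeping that justifies that assertion. No discrepancy with the paper's approach.
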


\section*{Appendix. The fundamental relations}
Group I:
\begin{align*}
U^{(0)}(t)=&p_0(2^4t^3+3\cdot2^3t^2+11\cdot2t+7)+y_0(-2t-1);\\
U^{(0)}(t^2)=&p_0(2^6t^5+2^7t^4+9\cdot2^4t^3+19\cdot2^2t^2+7\cdot2t)+y_0(-2^3t^3-2^3t^2-2t).
\end{align*}

Group II:
\begin{align*}
U^{(0)}(p_1t)=&p_0(2^8t^7+5\cdot2^7t^6+7\cdot2^7t^5+45\cdot2^4t^4+21\cdot2^4t^3+21\cdot2^2t^2+2^3t)\\
&+y_0(-2^5t^5-3\cdot2^4t^4-2^5t^3-5\cdot2t^2-t);\\
U^{(0)}(p_1t^2)=&p_0(2^{10}t^9+3\cdot2^{10}t^8+39\cdot2^7t^7+39\cdot2^7t^6+201\cdot2^4t^5+167\cdot2^3t^4+83\cdot2^2t^3\\
&+19\cdot2t^2)+y_0(-2^7t^7-2^8t^6-15\cdot2^4t^5-2^7t^4-19\cdot2t^3-5t^2).
\end{align*}

Group III:
\begin{align*}
U^{(1)}(y_0t)=&p_1(2^{13}t^9+3\cdot2^{13}t^8+69\cdot2^9t^7+125\cdot2^8t^6+617\cdot2^5t^5+131\cdot2^6t^4+597\cdot2^2t^3\\
&+105\cdot2^2t^2+35t)-2^{13}t^{11}-7\cdot2^{12}t^{10}-81\cdot2^9t^9-75\cdot2^9t^8-737\cdot2^5t^7\\
&-621\cdot2^4t^6-687\cdot2^2t^5-225\cdot2t^4-9\cdot2^2t^3;\\
U^{(1)}(y_0t^2)=&p_1(2^{15}t^{11}+7\cdot2^{14}t^{10}+95\cdot2^{11}t^9+103\cdot2^{11}t^8+1251\cdot2^7t^7+1371\cdot2^6t^6+543\cdot2^6t^5\\
&+1211\cdot2^3t^4+443\cdot2^2t^3+91\cdot2t^2+7t)-2^{15}t^{13}-2^{17}t^{12}-111\cdot2^{11}t^{11}-245\cdot2^{10}t^{10}\\
&-1495\cdot2^7t^9-817\cdot2^7t^8-5\cdot2^{13}t^7-1387\cdot2^3t^6-243\cdot2^3t^5-93\cdot2t^4-2^3t^3.\\
\end{align*}

Group IV:
\begin{align*}
U^{(1)}(p_0t)=&p_1(2^{8}t^7+5\cdot2^{7}t^6+11\cdot2^6t^5+7\cdot2^6t^4+11\cdot2^4t^3+5\cdot2^3t^2+2^2t)\\
&-2^8t^9-3\cdot2^8t^8-13\cdot2^6t^7-17\cdot2^5t^6-13\cdot2^4t^5-11\cdot2^2t^4-2^2t^3;\\
U^{(1)}(p_0t^2)=&p_1(2^{10}t^9+3\cdot2^{10}t^8+33\cdot2^7t^7+55\cdot2^6t^6+121\cdot2^4t^5+11\cdot2^6t^4+5\cdot2^5t^3\\
&+5\cdot2^2t^2+t)-2^{10}t^{11}-7\cdot2^9t^{10}-39\cdot2^7t^9-33\cdot2^7t^8-145\cdot2^4t^7\\
&-103\cdot2^3t^6-45\cdot2^2t^5-5\cdot2^2t^4-t^3.
\end{align*}

\subsection*{Acknowledgements}
 The second author was supported by  the National Natural Science Foundation of China (Grant No. 12401438).






\end{document}